\newtheorem{definition}{Definition}{\it}{}
{\it}{}
{\it}{}
\newtheorem{proposition}{Proposition}{\it}{}
\newtheorem{lemma}{Lemma}{\it}{}
\newtheorem{theorem}{Theorem}{\it}{}
\newtheorem{remark}{Remark}{\it}{}
\newtheorem{assumption}{Assumption}{\it}{}
\newcommand{\mc}{\mathcal}
\newcommand{\bb}{\mathbb}
\DeclareMathAlphabet{\mathbbmsl}{U}{bbm}{m}{sl}
\newcommand{\col}{\operatorname{col}}
\newcommand{\0}{\mathbf{0}}
\newcommand{\1}{\mathbf{1}}
\newcommand{\Rmnum}[1]{\expandafter\@slowromancap\romannumeral #1@}
\newcommand{\qedd}{\ensuremath{\hfill \blacksquare}}
\begin{document}
	%
	% paper title
	% Titles are generally capitalized except for words such as a, an, and, as,
	% at, but, by, for, in, nor, of, on, or, the, to and up, which are usually
	% not capitalized unless they are the first or last word of the title.
	% Linebreaks \\ can be used within to get better formatting as desired.
	% Do not put math or special symbols in the title.
	\title{Ensuring Grid-Safe Forwarding of Distributed Flexibility in Sequential DSO-TSO Markets }
	
	\author{Wicak Ananduta, Anibal Sanjab, and Luciana Marques 	
		% <-this % stops a space
		\thanks{The authors are with the Flemish Institute for Technological Research (VITO) and
			EnergyVille, 3600 Genk, Belgium. Email addresses:
			{\{wicak.ananduta, anibal.sanjab, luciana.marques\}@vito.be}.}% <-this % stops a space
		\thanks{This work is supported by the EU H2020 research and innovation programme under grant agreement No 957739 (OneNet project) and the energy transition funds project Alexander organized by the Belgian FPS economy.}}
	
	% The paper headers
	\markboth{Journal of \LaTeX\ Class Files,~Vol.~14, No.~8, August~2021}%
	{Shell \MakeLowercase{\textit{et al.}}: A Sample Article Using IEEEtran.cls for IEEE Journals}
	
	%\IEEEpubid{0000--0000/00\$00.00~\copyright~2021 IEEE}
	% Remember, if you use this you must call \IEEEpubidadjcol in the second
	% column for its text to clear the IEEEpubid mark.
	
	\maketitle

	\begin{abstract}
		This paper investigates sequential	flexibility markets consisting of a first market layer for distribution system operators (DSOs) to procure local flexibility to resolve their own needs (e.g., congestion management) followed by a second layer, in which the transmission system operator (TSO) procures remaining flexibility forwarded from the distribution system layer as well as flexibility from its own system for providing system services. As the TSO does not necessarily have full knowledge of the distribution grid constraints, this bid forwarding can cause an infeasibility problem for distribution systems, i.e.,  cleared distribution-level bids in the TSO layer might not satisfy local network constraints. To address this challenge, we formally introduce and examine three methods aiming to enable the grid-safe use of distribution-located resources in markets for system services, namely: a corrective three-layer market scheme, a bid  prequalification/filtering method, and a novel bid aggregation method. Technically, we provide conditions under which these methods can produce a grid-safe use of distributed flexibility. We also characterize the efficiency of the market outcome under these methods and reflect on their practicalities. Finally, we carry out a representative case study to evaluate the performances of the three methods, focusing on economic efficiency, grid-safety, and computational load.
	\end{abstract}
	\begin{IEEEkeywords}
		bid forwarding, flexibility markets, multi-level optimization, power system economics, TSO-DSO coordination. 
	\end{IEEEkeywords}
	
	\section{Introduction}
	\IEEEPARstart{T}{he} significant increase in distributed generation and storage, coupled with digitalization and energy management systems,  has opened up promising opportunities for system operators (SOs) to acquire flexibility from distribution networks through  coordinated market mechanisms \cite{gerard2018coordination,vicente2019evaluation,givisiez2020review}.  In such a flexibility market, flexibility service providers (FSPs), i.e., any type of market players with the ability to modify their power consumption or generation levels, submit their flexibility bids in order to fulfill certain system services such as balancing by the transmission SO (TSO) and congestion management by the TSO and/or distribution SO (DSO) \cite{d7.1onenet}. FSPs can be generation sources or consumers at any voltage level with controllable flexible assets, willing to adjust their generation or consumption patterns. Generation sources can, in this respect, increase or decrease their generation levels at a certain time instance to provide, respectively, upward or downward flexibility. Equivalently, consumers can increase or decrease their consumption during a certain time period to provide, respectively, downward or upward flexibility.  Note that, we focus on \emph{steady-state flexibility} \cite{riaz2022modelling}, specifically an active power adjustment with respect to a baseline.

	Establishing these flexibility market mechanisms to organize the procurement by the TSO and the DSOs (typically coined TSO-DSO coordination schemes) has been increasingly advocated in policy frameworks~\cite{Ceer} and remains a focal point of research in the scientific community~\cite{sanjab2022tso,sanjab2023joint,marques2023grid,lecadre2019game,vicente2019evaluation,papavasiliou2018coordination,JointTSODSO_Roos,FlexMarkets_Review}, as well as in numerous international demonstration initiatives~\cite{coordinet, schittekatte2020flexibility, onenet}. For instance, \cite{troncia2023market} proposes a  general framework for the design and analysis of TSO-DSO coordinated markets and studies existing demonstrators. 
	
	The common market scheme, in which all SOs can jointly procure flexibility in a co-optimized manner while taking the network constraints of all the involved grids into account, has been shown to be the most efficient scheme in terms of procurement costs \cite{marques2023grid}. However, it may not always be feasible in practice, due to the need {for} sharing full network information among the SOs and the technical and privacy challenges that this entails \cite{sanjab2023joint,marques2023grid,coordinetD62}. Additionally, although different decomposition methods have been proposed to deal with this issue in the common market scheme, e.g., in \cite{caramanis2016co-optimization,tsaousoglou2023integrating,gupta2023amalgamating}, they result in market clearing mechanisms that require iterations and multiple rounds of communication among the system operators.
	
	An alternative market scheme, which reflects well the current practice unlike the common market scheme, is the multi-layer flexibility market, which is a sequential market, where first DSOs procure flexibility resources connected to their own grids (Layer 1) and then the TSO follows (Layer 2). Therefore, unused bids from the DSO markets can be forwarded to the TSO market, thereby enhancing the offered flexibility value stacking potential.  
	Notably, this market scheme is being increasingly implemented in pilot projects across Europe (e.g. SmartNet \cite{smartnet}, CoordiNet \cite{coordinetD62}, and Interrface \cite{interrface}). Meanwhile, an early study on suitable market and regulatory conditions for bid forwarding in such a market scheme is conducted in \cite{bindu2023bid}.
	
	In this context, a crucial issue can arise when the TSO lacks awareness of the grid constraints within the distribution networks, potentially due to data privacy concerns.  Consequently, these constraints cannot be considered when the TSO clears its market despite the participation of distribution-level providers.  This raises a critical question: How can one ensure that, when the offered flexibility from the distribution systems {is cleared in TSO-layer markets, this would} not compromise the operation of the distribution networks, such as by causing local grid congestion? 
	
	As attempts to solve this issue, three approaches, namely a three-layer market scheme, a bid prequalification/filtering method, and a bid aggregation method, have been conceptually proposed  but not mathematically formulated in the aforementioned large-scale pilot projects. The three-layer market scheme, introduced in \cite{coordinetD62}, is a corrective market-based approach that involves the addition of subsequent local markets (as Layer 3) to resolve local congestions that might arise due to the outcome of the TSO market (Layer 2). This approach is akin to re-dispatch methods for congestion management \cite{bjorndal2013congestion,hermann2019congestion,pantovs2020market}. Differently, \cite{interrface} proposes a bid filtering concept, where the DSOs discard bids prior to forwarding to ensure that only safe-to-be-cleared bids are forwarded to the TSO market. The filtering process can take into account the bid prices when discarding bids for improved efficiency.  
		This scheme can {then also} directly complement the prequalification process needed to ensure the technical requirement  satisfaction of the next layer market \cite{bindu2023bid,attar2022congestion}.  Finally, \cite{papavasiliou2018coordination,papavasiliou2020hierarchical,mezghani2021coordination} discuss a bid aggregation method based on the residual supply function (RSF) approach where {each lower-layer market operator (a DSO in our case)} forwards (discrete levels of) safe aggregated bids {in the form of interface flows} along with {the linear approximation \cite{papavasiliou2018coordination} or dual-price-based approximation \cite{papavasiliou2020hierarchical,mezghani2021coordination} of} the RSF. Aggregating flexibility has been seen as a promising solution to facilitate the integration of distributed energy resources \cite{capitanescu2018tso,riaz2022modelling,khodabakhsh2023designing}. Furthermore, the RSF approach has been numerically tested in different hierarchical market schemes, as reported in \cite{nside2021} and \cite{papavasiliou2022interconnection}.
	
	The existing literature has provided valuable contributions in terms of conceptualizing these methods and providing first analyses of their suitability. However, to the best of our knowledge, no rigorous mathematical analyses of their grid-safety guarantees and their impacts on market efficiency have been provided. Therefore, in this paper, our focus is primarily on studying two aspects: 1) to what extent each method guarantees the safe operation of the distribution grids, and 2) how their performances in terms of efficiency and computational complexity compare. Our main contributions are the following:
	\begin{enumerate}[leftmargin=*]
		\item We develop  mathematical models  of the three methods, hence formally defining them. We are the first to provide mathematical models of the three-layer market scheme and bid prequalification method. Furthermore,  for the RSF-based bid aggregation method, we propose an improvement by using the exact but discretized optimal primal costs of lower layer problems as the RSFs, as shown in Algorithm \ref{alg:bid_aggregation} and Remark \ref{rem:outer_loop_alg2}, as opposed to approximated RSFs as in  \cite{papavasiliou2018coordination,papavasiliou2020hierarchical,mezghani2021coordination}. This modification allows for controlling the suboptimality of the market outcomes. We note that our formulations of these three methods are tailored to sequential flexibility markets.
		\item Then, we analyze the properties of these three methods, i.e., we identify conditions for achieving grid safety and evaluate their efficiencies, summarized as follows:
		\begin{itemize}[leftmargin=*]
			\item We show that the three-layer market can guarantee grid-safe (but possibly suboptimal) solutions on the condition that the DSO-layer markets are sufficiently liquid.
			\item We show that the bid prequalification method remarkably only requires {the properties of radial distribution systems and common pricing principles of upward and downward flexibility offers to guarantee grid-safety} (Proposition \ref{prop:grid_safe_alg1}.\ref{prop:grid_safe_alg1_point}). We also  characterize its sub-optimality upper and lower bounds (Propositions \ref{prop:grid_safe_alg1}.\ref{prop:subopt_alg1_1}--\ref{prop:subopt_alg1_2}).
			\item  
			We show that the bid aggregation method is guaranteed to produce grid-safe cleared bids (Proposition \ref{prop:grid_safe_alg2}). We also provide a theoretical upper bound on the suboptimality of {this} method (Theorem \ref{th:suboptimality_rsf}), which is shown to depend on the step sizes of the RSF functions.  
			The advantages of the bid aggregation method come at the cost of having to solve a mixed-integer linear program at the TSO layer, which  can hinder the bid aggregation method's practical implementation potential, as compared to the linear programming approaches offered by its two counterparts. 
		\end{itemize}  
	\end{enumerate}
	
	Our mathematical derivations and analyses are completed by an extensive numerical study for comparing the three proposed methods, by using an interconnected system comprised of IEEE and Matpower test cases. The numerical results corroborate our analytical findings and yield key insights on the potential of these methods. 
	Finally, we include a reflection on the practical implementability and coherence with existing EU regulation (as we primarily focus on European markets), highlighting practical challenges that can be faced by the bid aggregation method, which must be weighted against its theoretical superiority.
	
	The paper is organized as follows. Section \ref{sec:seq_market} introduces the sequential  market model and its feasibility problem. Sections \ref{sec:three-layer}--\ref{sec:bid_aggregation}, respectively, present the models of the three-layer, bid prequalification, and bid aggregation methods, along with our analyses regarding grid-safety, market efficiency, computational complexity, as well as practical challenges for their implementation.  Section \ref{sec:num_sim} presents the numerical case study, while Section \ref{sec:conclusion} summarizes our comparison and provides concluding remarks. The proofs of all technical statements are provided in the Appendix.
	
	\paragraph*{Notation} We denote the set of real numbers by $\bb R$ and the set of positive real numbers by $\bb R_+$. Vectors are denoted by bold symbols. The operator $\col(\cdot)$ concatenates its argument column-wise. The cardinality of a set is denoted by $|\cdot|$. An $n$-dimensional hyperbox is denoted by $[\bm h^{\min}, \bm h^{\max}] \subset \bb R^n$, where $\bm h^{\min},\bm h^{\max} \in \bb R^n$ denote the lower and upper bound vectors.
	
	\section{Sequential Flexibility Markets}
	\label{sec:seq_market}
	
	\subsection{Market Model}
	\label{sec:market_model}
	In a sequential (multi-layer) market, we suppose that the DSOs have priority to first purchase flexibility from their local  FSPs to meet their own grid needs, e.g., for (active power) congestion management, which is the focus in this work, while abiding by their own network constraints. Subsequently, the remaining flexibility is forwarded to the transmission-layer market (run to meet the TSO needs, such as balancing), in which transmission-level FSPs also offer their flexibility. As such, both the value stacking potential of distributed-energy resources and the efficiency of flexibility markets are enhanced by allowing FSPs located in distribution systems to participate in transmission-layer markets as well \cite{marques2023grid,sanjab2023joint}. 
	
	First, we provide a mathematical formulation of the optimization problems solved to clear these sequential markets. 
	We denote by $\mc N^{\mathrm D}:=\{1,2,\dots,N^D\}$ the set of distribution systems connected to transmission busses. We use the subscript $0$ to associate variables of the TSO. Let us, then, define the local decision variables of the DSOs and TSO by $\bm x_m  {=} \col(\bm u_m,\bm d_m,\bm p_m)$, for each $m \in\mc N:= \mc N^{\mathrm{D}} \cup \{0\}$,  where $\bm u_m:=\col((u_{m,n})_{n \in \mc U_m})$,  $\bm d_m:=\col((d_{m,n})_{n \in \mc D_m})$, and $\bm p_m \in \bb R^{|\mc N_m|}$ denote the  {decision} vectors of upward flexibility {volume,} downward flexibility {volume,} and {net-injected} power, respectively, with $\mc U_m$, $\mc D_m$, and $\mc N_m$ being, {respectively,} the sets of upward FSPs, downward FSPs, and busses of network $m$.    
	Next, we denote by $z_m \in \bb R$, $\forall m \in \mc N^{\mathrm D}$, the interface power flow between the DSO-$m$ and the corresponding transmission bus,  {where a positive value denotes that the power flows from the transmission bus to the distribution network,} 
	and define  $\bm z = \col((z_m)_{m \in \mc N^{\mathrm D}})$. For simplicity, we consider that each DSO only has one interconnection point with the transmission network.  
	We can then model the sequential market clearing scheme as follows:\\
		\noindent \textbf{Layer 1:} Each DSO $m \in \mc N^{\mathrm D}$ solves the linear program (LP):
		\begin{subequations}%\small 
			\begin{align}
				& \min_{\bm x_m, z_m} \ \   \bm c_m^\top \bm x_m + c^z_m z_m \label{eq:cost_dso}\\
				&\operatorname{s.t.} \notag\\ 
				&	 {
					\sum_{n \in \mc U_{m,1}}\hspace{-2pt} u_{m,n} \hspace{-2pt}-\hspace{-4pt} \sum_{n \in \mc D_{m,1}}\hspace{-2pt} d_{m,n} \hspace{-2pt}-\hspace{-2pt} p_{m,1} \hspace{-2pt}-\hspace{-2pt} z_m \hspace{-2pt}=\hspace{-2pt} e_{m,1}, \label{eq:power_balance_dso_detail1}} \\
				&	 {
					\sum_{n \in \mc U_{m,k}}\hspace{-2pt} u_{m,n}\hspace{-2pt} -\hspace{-4pt} \sum_{n \in \mc D_{m,k}}\hspace{-2pt} d_{m,n}\hspace{-2pt} -\hspace{-2pt} p_{m,n} \hspace{-2pt}=\hspace{-2pt} e_{m, {k}}, \forall k \hspace{-2pt}\in\hspace{-2pt} \mc N_m \hspace{-2pt}\setminus\hspace{-2pt} \{1\}, \label{eq:power_balance_dso_detail2}}\\
				& C_m \bm p_m \in [\bm f_m^{\min}, \bm f_m^{\max}], \label{eq:limit_flow_dso}\\
				&  \bm u_m \in [0,\bm u_m^{\max}], \  \bm d_m \in [0,\bm d_m^{\max}], \label{eq:limit_u_d_dso} \\
				& z_m \in [z_m^{\min}, z_m^{\max}]. \label{eq:limit_z_dso}
			\end{align}
			\label{eq:layer_dso}%
		\end{subequations}%
		\indent	The objective of Problem \eqref{eq:layer_dso} is to minimize the procurement cost in \eqref{eq:cost_dso}, where the vector $\bm c_m = \col(\bm c_m^u, -\bm c_m^d, \0 )$ collects the upward ($\bm c_m^u := \col((c_{m,n}^u)_{n \in \mc U_m})\in \bb R_+^{|\mc U_m|}$) and downward ($\bm c_m^d:= \col((c_{m,n}^d)_{n \in \mc D_m})\in \bb R_+^{|\mc D_m|}$) bid prices, {while} $c_m^z \in \bb R$ denotes the interface flow price. The equality constraints  \eqref{eq:power_balance_dso_detail1}--\eqref{eq:power_balance_dso_detail2} are the power balance equations of all busses in the distribution network $m$, where, for each node $k \in \mc N_m$, $\mc U_{m,k}$, $\mc D_{m,k}$, {and $e_{m,k}$}  denote, respectively, the subsets of upward bids,  downward bids, and the net anticipated base injections, which reflects the amount of required flexibility at every node. Without loss of generality, the root node is labeled as node 1.  
		The constraints in \eqref{eq:limit_flow_dso}--\eqref{eq:limit_z_dso} respectively represent the bounds of the line power flows,  
		flexibility bids,  
		and interface flow.  
		The matrix $C_m$ in \eqref{eq:limit_flow_dso} is a nodal injection to line flow sensitivity matrix, such as the power transfer distribution factor (PTDF). We focus on the setting in which Problem \eqref{eq:layer_dso} has a non-empty feasible set, i.e., the volume of offered flexibility is collectively adequate to meeting the DSO's flexibility need (supporting the use of a flexibility market). We denote a solution to Problem \eqref{eq:layer_dso} by $(\bm x_m^{\star}, z_m^{\star})$. 
		In addition, in the next sections, we use the following compact formulation of \eqref{eq:power_balance_dso_detail1}--\eqref{eq:power_balance_dso_detail2}:
			\begin{equation}
				A_m \bm x_m + B_m z_m = \bm e_m, \label{eq:power_balance_dso}
			\end{equation}
			where $\bm e_m:=\col((e_{m,k})_{k\in \mc N_m})$, while $A_m \in \bb R^{|\mc N_m| \times (|\mc U_m|+|\mc D_m|+|\mc N_m|)}$ is a linearly-independent selection matrix and {$B_m \in \bb R^{|\mc N_m|}$} is a standard Euclidean basis vector such that \eqref{eq:power_balance_dso_detail1}--\eqref{eq:power_balance_dso_detail2} hold.
		\medskip
		
		\noindent \textbf{Layer 2:} The TSO solves the following LP:
		\begin{subequations}%\small 
			\begin{align}
				& \min \ \
				%_{\bm x_0, (\bm u_m, \bm d_m, z_m)_{m \in \mc N^{\mathrm D}}} \hspace{-10pt}  
				\bm c_0^\top \bm x_0 +\hspace{-4pt} \sum_{m \in \mc N^{\mathrm{D}}} \hspace{-4pt}\left(\bm c_m^{u\top} \bm u_m - \bm c_m^{d\top} \bm d_m - c^z_m z_m\right) \\
				&\operatorname{s.t.}  \notag\\  
				&	 {
					\sum_{n \in \mc U_{0,k}}\hspace{-2pt} u_{0,n} \hspace{-2pt}-\hspace{-4pt} \sum_{n \in \mc D_{0,k}}\hspace{-2pt} d_{0,n} \hspace{-2pt}-\hspace{-2pt} p_{0,k} \hspace{-2pt}+\hspace{-2pt} z_{\phi(k)} \hspace{-2pt}=\hspace{-2pt} e_{0,k}, \quad \forall k \in \mc N_0^{\mathrm c} \label{eq:power_balance_tso_detail1}} \\
				&	 {
					\sum_{n \in \mc U_{0,k}}\hspace{-2pt} u_{0,n}\hspace{-2pt} -\hspace{-4pt} \sum_{n \in \mc D_{0,k}}\hspace{-2pt} d_{0,n}\hspace{-2pt} -\hspace{-2pt} p_{0,n} \hspace{-2pt}=\hspace{-2pt} e_{0, {k}}, \quad  \forall k \hspace{-2pt}\in\hspace{-2pt} \mc N_0 \hspace{-2pt}\setminus\hspace{-2pt} \mc N_0^{\mathrm c}, \label{eq:power_balance_tso_detail2}}\\
				& \1\hspace{-1.5pt}^\top \hspace{-1.5pt} (\bm u_m \hspace{-1.5pt}+\hspace{-1.5pt} \bm u_m^\star) \hspace{-1.5pt}-\hspace{-1.5pt} \1\hspace{-1.5pt}^\top\hspace{-1.5pt}(\bm d_m  \hspace{-1.5pt}+\hspace{-1.5pt}\bm d_m^\star) + z_m \hspace{-1.5pt}=\hspace{-1.5pt} \1\hspace{-1.5pt}^\top \bm e_m ,    \forall m \hspace{-1.5pt}\in\hspace{-1.5pt} \mc N^{\mathrm D}\hspace{-1.5pt}, \label{eq:aggregated_balance_dn_tso}\\
				& C_0 \bm p_0 \in [\bm f_0^{\min}, \bm f_0^{\max}], \label{eq:limit_flow_tso} \\
				&  \bm u_0 \hspace{-2pt}\in [0,\bm u_0^{\max}],   \bm d_0 \in [0,\bm d_0^{\max}], \label{eq:limit_u_d_tso} \\
				& \bm u_m \hspace{-2pt}\in [0,{\bm u_m^{\max}\hspace{-2pt}-\hspace{-1pt}\bm u_m^{\star}}], %\ \forall m \in \mc N^{\mathrm D}, %\label{eq:limit_u_dn_tso}\\
				%& 
				\bm d_m \hspace{-2pt}\in [0,{\bm d_m^{\max}\hspace{-2pt}-\hspace{-2pt}\bm d_m^{\star}}], \forall m \in \mc N^{\mathrm D}, \label{eq:limit_u_d_dn_tso}\\
				& z_m \in [z_m^{\min}, z_m^{\max}], \ \forall m \in \mc N^{\mathrm D}. \label{eq:limit_z_tso}
			\end{align}%
			\label{eq:layer_tso}%
		\end{subequations}%
		\indent Problem \eqref{eq:layer_tso} aims at minimizing the TSO's procurement costs considering transmission-level and distribution-level bids as well as the pricing of the interface flow. The parameters and components of the problem are defined similarly to Problem \eqref{eq:layer_dso}. {Indeed,} $\bm c_0 = \col(\bm c_0^u, -\bm c_0^d, \0 )$ collects the upward ($\bm c_0^u\in \bb R_+^{|\mc U_0|}$) and downward ($\bm c_0^d\in \bb R_+^{|\mc D_0|}$) bid prices of transmission-level FSPs;  \eqref{eq:power_balance_tso_detail1}--\eqref{eq:power_balance_tso_detail2} represent the nodal power balance equations in the transmission network, with $\mc N_0^{\mathrm c}$ being the set of transmission nodes connected to a distribution system and $\phi$ is a one-to-one mapping from $\mc N_0^{\mathrm c}$ to $\mc N^{\mathrm D}$;
		\eqref{eq:limit_flow_tso} represents the line capacity limits, and \eqref{eq:limit_u_d_tso} represents the transmission-level bid capacities. It is important to note that although distribution-level FSPs participate in this layer as well, the bid capacities are based on what remains from the first layer (as captured in \eqref{eq:limit_u_d_dn_tso}).  Moreover, the TSO only considers the aggregated power balance constraint for each distribution system as in \eqref{eq:aggregated_balance_dn_tso} instead of full network constraints of the distribution systems \eqref{eq:power_balance_dso_detail1}--\eqref{eq:limit_flow_dso}, which then includes the risk that a market clearing result of Layer 2 may cause network constraint violations for the DSOs' grids from which this flexibility originates. Let us denote a solution to the second layer by $(\bm x_0^{\star\star}$,  $(\bm u_m^{\star\star}, \bm d_m^{\star\star}, z_m^{\star\star})_{ \forall m \in \mc N^{\mathrm D}})$, which we assume exists, for any $\bm u_m^{\star} \in [0,\bm u_m^{\max}]$ and $\bm d_m^{\star} \in [0,\bm d_m^{\max}]$, for all $m \in \mc N^{\mathrm D}$. This assumption implies that the TSO can meet its flexibility needs by procuring from transmission-level FSPs only. However, a more efficient solution can potentially be obtained by allowing the participation of distribution-level FSPs. As such, based on the introduced Layer 1 and Layer 2, the \emph{practical} sequential market clearing problem can then be formally defined in Definition 1. Here, we refer to the sequential market in Definition 1 as practical in the sense that it does not require knowledge of the distribution grid models in the transmission-level market (Layer 2).
		
		\begin{definition}
			\label{def:sequential} 
				The practical sequential market clearing problem is defined by Problem \eqref{eq:layer_dso}, for each $m \in \mc N^{\mathrm D}$, {as Layer 1} and Problem \eqref{eq:layer_tso} as {Layer 2.} 
		\end{definition}

		\subsection{ {Relationship with other DSO-TSO market models}}
		
		{In addition to the practical sequential market clearing model in Definition \ref{def:sequential}, other TSO-DSO coordinated market models are introduced next due to their relevance for our analysis.}
		\begin{definition}
			\label{def:ideal_sequential} { 
				The {\emph{idealized}} sequential market clearing problem is defined by Problem \eqref{eq:layer_dso}, for each $m \in \mc N^{\mathrm D}$, {as Layer 1} and Problem \eqref{eq:layer_tso}, where \eqref{eq:aggregated_balance_dn_tso} is substituted with \eqref{eq:power_balance_dso} and \eqref{eq:limit_flow_dso}, {as Layer 2.} A solution to this problem is denoted by $(\bm x_0^{\mathrm i}$,  $(\bm x_m^{\mathrm i}, z_m^{\mathrm i})_{m \in \mc N^{\mathrm D}})$.}
		\end{definition}
		
		{In an idealized sequential market model (Definition \ref{def:ideal_sequential}), the distribution system constraints are included in the transmission-layer market (layer 2). The sequential market model in Definition \ref{def:sequential} is, then, a modified version of the idealized model, capturing the practical setting in which the TSO does not have access to the distribution systems network models, and hence, cannot include the distribution-level power flow representation and constraints in its market clearing.}
		%The considered practical sequential market model (Definition \ref{def:sequential}) is a modified version of the {\emph{idealized}} sequential market model, where we remove the distribution network constraints in level 2, capturing the practical setting in which the TSO does not have access to the distribution systems network models, and hence, cannot include the distribution-level power flow representation and constraints in its market clearing. % {This setting raises the risk of procured distribution-level flexibility in level 2 to lead to distribution systems constraint violation, as investigated in the analysis that ensues.}  	
		As shown in \cite[Cor. III.2]{marques2023grid}, the optimality of {the} {idealized} sequential market model  is lower bounded by the optimal value of the common market problem, {capturing a joint TSO-DSO market clearing setting rather than a sequential multi-level structure,} defined as follows: 
		\begin{definition}
			\label{def:common_market} {
				The common market clearing problem is formulated as:
				\begin{subequations}%\small 
					\begin{align}
						\min_{\bm x_0, (\bm x_m, z_m)_{m \in \mc N^{\mathrm D}}} \ \ &  J^{\mathrm{tot}} := \bm c_0^\top \bm x_0 + \textstyle\sum_{m\in \mc N^D} \bm c_m^\top \bm x_m \label{eq:common_market_cost}\\
						\operatorname{s.t.} \ \ & \text{{\eqref{eq:power_balance_tso_detail1}, \eqref{eq:power_balance_tso_detail2},} \eqref{eq:limit_flow_tso}, and \eqref{eq:limit_u_d_tso}, } \notag\\
						& \text{{\eqref{eq:power_balance_dso_detail1}, \eqref{eq:power_balance_dso_detail2},} \eqref{eq:limit_flow_dso}, \eqref{eq:limit_u_d_dso}, and \eqref{eq:limit_z_dso}, } \forall m \in \mc N^{\mathrm D}. \notag
					\end{align}
					\label{eq:common_market}%
				\end{subequations}%
				{We denote a solution to Problem \eqref{eq:common_market} by $(\bm x_0^\circ,(\bm x_m^\circ, z_m^\circ)_{m \in \mc N^{\mathrm D}})$.} 
			}
		\end{definition}
	
			Furthermore, as we show next in Proposition \ref{prop:sequential_frag}, the optimality of {the} idealized sequential market model  can be upper bounded by the  optimal value of {another sequential DSO-TSO coordinated market model known as} the fragmented market model (Definition \ref{def:fragmented}), 	where each system operator only has access to its local FSPs but the effect of the clearing of Layer 1, in terms of interface flows, is taken into account in Layer 2.
			\begin{definition}
				\label{def:fragmented} The fragmented market clearing problem is defined by Problem \eqref{eq:layer_dso}, for each $m \in \mc N^{\mathrm D}$, as Layer 1 and Problem \eqref{eq:layer_tso}, where \eqref{eq:limit_u_d_dn_tso} is substituted with $\bm u_m = 0$, $\bm d_m=0$, for all $m \in \mc N^{\mathrm D}$ and \eqref{eq:limit_z_tso} is substituted with $z_m = z_m^\star$, {as Layer 2.} A solution to this problem is denoted by $(\bm x_0^{\mathrm f}$,  $(\bm x_m^{\mathrm f}, z_m^{\mathrm f})_{m \in \mc N^{\mathrm D}})$.
			\end{definition} 
			\begin{proposition}
				\label{prop:sequential_frag}
				Let $(\bm x_0^{\mathrm i}$,  $(\bm x_m^{\mathrm i}, z_m^{\mathrm i})_{m \in \mc N^{\mathrm D}})$ be a solution to the {idealized} sequential market model (Definition \ref{def:ideal_sequential}) and $(\bm x_0^{\mathrm f}$,  $(\bm x_m^{\mathrm f}, z_m^{\mathrm f})_{m \in \mc N^{\mathrm D}})$ be a solution to the fragmented market model (Definition \ref{def:fragmented}). Then it holds that
				\begin{equation}
					\bm c_0^\top \bm x_0^{\mathrm i} + \textstyle\sum_{m\in \mc N^D} \bm c_m^\top \bm x_m^{\mathrm i} \leq \bm c_0^\top \bm x_0^{\mathrm f} + \textstyle\sum_{m\in \mc N^D} \bm c_m^\top \bm x_m^{\mathrm f}.
					\label{eq:ineq_sequential_frag}
				\end{equation}
			\end{proposition}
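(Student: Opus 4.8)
The plan is to exhibit the fragmented-market solution as a feasible point of the idealized sequential market, which immediately gives the inequality by optimality. Both models share the same Layer 1 (Problem \eqref{eq:layer_dso}), so the DSO solutions $(\bm x_m^\star, z_m^\star)$ and the resulting Layer-2 bid-capacity bounds $\bm u_m^{\max}-\bm u_m^\star$, $\bm d_m^{\max}-\bm d_m^\star$ are identical in both. The only difference lies in Layer 2: the fragmented model pins $\bm u_m = \0$, $\bm d_m = \0$ for all $m\in\mc N^{\mathrm D}$, while the idealized model replaces the aggregated distribution balance \eqref{eq:aggregated_balance_dn_tso} by the full distribution power-flow constraints \eqref{eq:power_balance_dso} and \eqref{eq:limit_flow_dso}.

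First I would take a solution $(\bm x_0^{\mathrm f}, (\bm x_m^{\mathrm f}, z_m^{\mathrm f})_{m\in\mc N^{\mathrm D}})$ of the fragmented model; here $\bm x_m^{\mathrm f}=\col(\0,\0,\bm p_m^{\mathrm f})$ since the Layer-2 distribution bids are forced to zero, and the $z_m^{\mathrm f}$ are the interface flows chosen by the TSO. I then propose the candidate point for the idealized model: keep $\bm x_0 := \bm x_0^{\mathrm f}$, keep $z_m := z_m^{\mathrm f}$, and for each DSO set $\bm u_m := \0$, $\bm d_m := \0$, and $\bm p_m := \bm p_m^{\mathrm f}$. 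The key step is to verify that this candidate is feasible for the idealized Layer 2. The transmission-side constraints \eqref{eq:power_balance_tso_detail1}, \eqref{eq:power_balance_tso_detail2}, \eqref{eq:limit_flow_tso}, \eqref{eq:limit_u_d_tso}, the box constraints \eqref{eq:limit_u_d_dn_tso} (trivially, as $\0$ lies in each box), and \eqref{eq:limit_z_tso} all hold because they are inherited verbatim from the fragmented solution. What must be checked afresh are the full distribution constraints \eqref{eq:power_balance_dso} and \eqref{eq:limit_flow_dso} that the idealized model adds. For these I would invoke feasibility of the Layer-1 problem: $(\bm x_m^\star, z_m^\star)$ satisfies $A_m\bm x_m^\star + B_m z_m^\star = \bm e_m$ and $C_m\bm p_m^\star\in[\bm f_m^{\min},\bm f_m^{\max}]$. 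However, the idealized candidate uses $z_m^{\mathrm f}$, not $z_m^\star$, so one cannot simply reuse $\bm x_m^\star$. The natural fix is to observe that in the fragmented model the TSO's aggregated balance \eqref{eq:aggregated_balance_dn_tso} with $\bm u_m=\bm d_m=\0$ forces $z_m^{\mathrm f} = \1^\top\bm e_m - \1^\top\bm u_m^\star + \1^\top\bm d_m^\star$, which is exactly the aggregated interface flow consistent with the Layer-1 dispatch; hence $z_m^{\mathrm f}$ is an admissible interface value for which a feasible internal distribution dispatch exists — indeed one may re-solve the feasibility system $A_m\bm x_m + B_m z_m^{\mathrm f} = \bm e_m$, $C_m\bm p_m\in[\bm f_m^{\min},\bm f_m^{\max}]$, $\bm u_m\in[0,\bm u_m^{\max}]$, $\bm d_m\in[0,\bm d_m^{\max}]$, which is nonempty because $z_m^\star$ and $z_m^{\mathrm f}$ produce the same aggregate, and set the idealized candidate's $\bm x_m$ accordingly.

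With feasibility established, I would finish by comparing objective values. The idealized objective evaluated at the candidate is $\bm c_0^\top\bm x_0^{\mathrm f} + \sum_m(\bm c_m^{u\top}\0 - \bm c_m^{d\top}\0 - c_m^z z_m^{\mathrm f}) = \bm c_0^\top\bm x_0^{\mathrm f} - \sum_m c_m^z z_m^{\mathrm f}$, which is precisely the fragmented Layer-2 optimal objective; adding the common Layer-1 term $\sum_m \bm c_m^\top\bm x_m^\star$ to both sides and noting that $\bm c_m^\top\bm x_m = \bm c_m^{u\top}\bm u_m - \bm c_m^{d\top}\bm d_m$ reduces, for the candidate's chosen $\bm x_m$, to the Layer-1 contribution plus the (non-negative-cost) adjustment from re-solving the internal dispatch — here I would argue that the $\bm c_m$-cost of any internal re-dispatch with zero Layer-2 bids equals that of the Layer-1 solution, since only $\bm p_m$ changes and $\bm c_m$ has zero entries on the $\bm p_m$ block. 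Therefore the total cost of the candidate equals the fragmented total cost, and since the idealized solution is optimal it can only be cheaper, giving \eqref{eq:ineq_sequential_frag}. The main obstacle I anticipate is the bookkeeping in the previous paragraph: making precise that moving from $z_m^\star$ to $z_m^{\mathrm f}$ while keeping the Layer-1 bids fixed still admits a feasible, equal-cost internal distribution dispatch — this hinges on the fact that \eqref{eq:aggregated_balance_dn_tso} with zeroed bids is exactly the $\1^\top$-aggregate of \eqref{eq:power_balance_dso}, so the two models agree on what interface flow Layer 1 "hands over," and the $\bm p_m$-block carries zero price.
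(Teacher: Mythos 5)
Your proposal is correct and follows essentially the same route as the paper's proof: both arguments hinge on observing that the aggregated balance \eqref{eq:aggregated_balance_dn_tso} with $\bm u_m=\bm d_m=\0$ pins the fragmented Layer-2 interface flow to the value consistent with the Layer-1 dispatch (i.e., $z_m^{\mathrm f}=z_m^{\star}$), so the Layer-1 solution itself certifies that the fragmented outcome satisfies \eqref{eq:power_balance_dso} and \eqref{eq:limit_flow_dso} and is therefore a feasible (but not necessarily optimal) point of the idealized Layer 2. Your detour through ``re-solving the feasibility system'' is unnecessary once $z_m^{\mathrm f}=z_m^{\star}$ is established, but it does not change the substance of the argument.
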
%
			However, these lower and upper optimality bounds do not apply to the (practical) sequential market model due to the exclusion of the distribution networks' constraints in Layer 2. In fact, there is not even a feasibility guarantee for the outcome of the practical sequential market as we discuss next.
		
		\subsection{{Grid Safety Considerations}}
		\label{sec:grid_safety_considerations}
		%\paragraph*{compare with sequential market scheme in \cite{marques2023grid}} 
		The sequential market (Definition \ref{def:sequential}) {considers} that the TSO does not know the grid constraints of {the} distribution systems. {In other words,} \eqref{eq:limit_flow_dso}, for all $m \in \mc N^{\mathrm D}$, are not included as constraints in Problem \eqref{eq:layer_tso}. This assumption is relevant in practice as DSOs might {be restricted from sharing} private {network} information externally (e.g., with the TSO or a third party market operator) or might simply be unable to replicate network models in external servers as investigated in \cite{coordinetD62}.  {Hence,} a solution to {Layer 2} might not be \emph{grid safe} for the distribution system, i.e., it might not satisfy the constraint in \eqref{eq:limit_flow_dso}, for some $m \in \mc N^{\mathrm{D}}$. {In contrast, grid-safe bids are formally defined next.}
		\begin{definition}[Grid-safe bids]\label{def:grid_safe}
			Bids $(\bm u_m, \bm d_m)$, for all $m \in \mc N$, are grid safe if there exist $\bm p_m$, for all $m \in \mc N$, and $\bm z$ such that  {\eqref{eq:power_balance_dso_detail1}, \eqref{eq:power_balance_dso_detail2},} \eqref{eq:limit_flow_dso}, and \eqref{eq:limit_z_dso}, for all $m \in \mc N^{\mathrm D}$, as well as  {\eqref{eq:power_balance_tso_detail1}, \eqref{eq:power_balance_tso_detail2},} and \eqref{eq:limit_flow_tso} hold.
		\end{definition}
		
		Definition \ref{def:grid_safe} technically states that a set of distribution-level bids $(\bm u_m, \bm d_m)$ are grid safe if its activation results in physical variable value $\bm p_m$ that satisfies all grid constraints. 	{We note that, in this work, we focus on congestion management constraints of distribution grids, i.e.,  \eqref{eq:power_balance_dso_detail1}--\eqref{eq:limit_z_dso}, and use a linear network model.  Therefore, the grid-unsafe events that are the focus of this work are distribution line congestions. To include other types of constraints in the definition of grid-safe bids, e.g., voltage constraints, which implies that we take into account voltage violations as another grid-unsafe events, we can either  expand the sensitivity-matrix-based model of distribution systems to also consider, e.g., net off-take to nodal voltage sensitivity matrices in a similar way to constraint \eqref{eq:limit_flow_dso} or consider different power flow models, such as a (linearized) Branch Flow model \cite{LinDistFlow}. Although more complex network models can be adapted to the bid forwarding methodologies discussed in this work, their technical analyses will require dedicated efforts, which can constitute the focus point of future work.} 
		
		{Next, we show a particular example when the cleared bids of the sequential market are grid safe. We suppose that} {the interface flows are optimally priced, i.e., in Problems \eqref{eq:layer_dso} and \eqref{eq:layer_tso},  {$c_m^z$ are chosen as an optimal dual variables of Problem \eqref{eq:common_market} associated with \eqref{eq:power_balance_tso_detail1}.} {In this case,} {forwarding any remaining distribution-level bids to Layer 2 is not needed,} as  {implied by the next proposition.} 
			\begin{proposition}
				\label{prop:optimal_price}
				Suppose that $\col((c_m^z)_{m \in \mc N^D})$ is an optimal dual variable of Problem \eqref{eq:common_market} associated with  {\eqref{eq:power_balance_tso_detail1}} (an optimal price). Then, the cleared bids $(\bm u_m^{\star}+\bm u_m^{\star\star}, \bm d_m^{\star}+\bm d_m^{\star\star}), \forall m \in \mc N^{\mathrm D},$ and $(\bm u_0^{\star\star},\bm d_0^{\star\star})$ computed by solving {Problems} \eqref{eq:layer_dso} {and} \eqref{eq:layer_tso} are an optimal solution to Problem \eqref{eq:common_market} only if $\bm u_m^{\star\star}=0$ and $\bm d_m^{\star\star}=0$, for all $m \in \mc N^{\mathrm D}$.
			\end{proposition}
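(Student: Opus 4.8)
The plan is to use LP strong duality to decompose the common market \eqref{eq:common_market} along the interface-flow variables, exploiting the hypothesis that $\col((c^z_m)_{m\in\mc N^{\mathrm D}})$ is an optimal dual of \eqref{eq:power_balance_tso_detail1}. The decomposition will show that \eqref{eq:common_market} splits exactly into the Layer-1 problems \eqref{eq:layer_dso} plus a residual TSO subproblem, and the claim will then follow by comparing objective values.

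First I would form the partial Lagrangian of \eqref{eq:common_market} obtained by dualizing only the coupling constraints \eqref{eq:power_balance_tso_detail1} (indexed by $k\in\mc N_0^{\mathrm c}$) with multipliers $\bm\mu=\col((\mu_k)_k)$. Since \eqref{eq:power_balance_tso_detail1} is the only place where the interface flow $z_{\phi(k)}$ meets the TSO block, minimizing the Lagrangian over the remaining feasible set separates additively into a TSO subproblem in $\bm x_0$ over \eqref{eq:power_balance_tso_detail2}, \eqref{eq:limit_flow_tso}, \eqref{eq:limit_u_d_tso}, with value $V_0(\bm\mu)$, and, for each $m\in\mc N^{\mathrm D}$, a subproblem in $(\bm x_m,z_m)$ with objective $\bm c_m^\top\bm x_m+\mu_{\phi^{-1}(m)}z_m$ over \eqref{eq:power_balance_dso_detail1}--\eqref{eq:limit_z_dso} --- which is exactly Problem \eqref{eq:layer_dso} once $\mu_{\phi^{-1}(m)}=c^z_m$. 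By LP strong duality, evaluating at an optimal dual $\bm\mu^\circ$ (for which $\mu^\circ_{\phi^{-1}(m)}=c^z_m$ by hypothesis) gives $J^{\mathrm{tot}}=V_0+\sum_{m\in\mc N^{\mathrm D}}V_m^{\mathrm{L1}}$, where $V_m^{\mathrm{L1}}=\bm c_m^\top\bm x_m^\star+c^z_m z_m^\star$ is the Layer-1 optimum.

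Next I would take the cleared bids, completed by some $\bm p$'s and $\bm z$'s to an optimal point $\bigl(\tilde{\bm x}_0,(\tilde{\bm x}_m,\tilde z_m)_m\bigr)$ of \eqref{eq:common_market}, and evaluate its objective. Because this point satisfies \eqref{eq:power_balance_tso_detail1}, the multiplier terms telescope, so $\bm c_0^\top\tilde{\bm x}_0+\sum_m\bm c_m^\top\tilde{\bm x}_m$ equals the TSO-subproblem objective at $\tilde{\bm x}_0$ plus $\sum_m(\bm c_m^\top\tilde{\bm x}_m+c^z_m\tilde z_m)$. The first part is $\ge V_0$ because $\tilde{\bm x}_0$ is TSO-subproblem-feasible (Layer 2 enforces \eqref{eq:power_balance_tso_detail2}, \eqref{eq:limit_flow_tso}, \eqref{eq:limit_u_d_tso}), and each $\bm c_m^\top\tilde{\bm x}_m+c^z_m\tilde z_m\ge V_m^{\mathrm{L1}}$ because $(\tilde{\bm x}_m,\tilde z_m)$ is feasible for \eqref{eq:layer_dso} --- its bid bounds follow from \eqref{eq:limit_u_d_dn_tso} together with Layer-1 feasibility, and its remaining constraints are part of \eqref{eq:common_market}'s feasibility. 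Summing and using $J^{\mathrm{tot}}=V_0+\sum_m V_m^{\mathrm{L1}}$, optimality of the cleared bids forces every inequality to be tight; in particular $(\tilde{\bm x}_m,\tilde z_m)$ is an optimal solution of Problem \eqref{eq:layer_dso} for every $m\in\mc N^{\mathrm D}$.

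Finally I would convert the tightness $\bm c_m^\top\tilde{\bm x}_m+c^z_m\tilde z_m=V_m^{\mathrm{L1}}=\bm c_m^\top\bm x_m^\star+c^z_m z_m^\star$ into $\bm u_m^{\star\star}=\bm d_m^{\star\star}=\0$: using \eqref{eq:aggregated_balance_dn_tso} (and its Layer-1 counterpart obtained by summing \eqref{eq:power_balance_dso_detail1}--\eqref{eq:power_balance_dso_detail2}) to express $\tilde z_m-z_m^\star$ through $\1^\top\bm u_m^{\star\star}$ and $\1^\top\bm d_m^{\star\star}$, substituting, and invoking $\bm c_m^u,\bm c_m^d\ge\0$ together with optimality of the interface price, so that no remaining distribution bid can be activated at Layer 2 without strictly increasing DSO $m$'s Layer-1 cost. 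I expect this last step to be the main obstacle: it hinges on getting the sign bookkeeping of \eqref{eq:aggregated_balance_dn_tso} exactly right and, more delicately, on excluding the degenerate case in which a nonzero $(\bm u_m^{\star\star},\bm d_m^{\star\star})$ leaves DSO $m$'s Layer-1 objective unchanged --- i.e., on ensuring that being simultaneously grid-safe and cost-optimal genuinely forces $\bm u_m^{\star\star}=\bm d_m^{\star\star}=\0$ rather than merely admitting it as one optimal selection.
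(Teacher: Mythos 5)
Your Lagrangian route is genuinely different from the paper's: the paper simply invokes the external equivalence result that, under optimal interface pricing, the common-market solution coincides with that of the fragmented market \cite[Prop.~2]{marques2023grid}, and then reads off $\bm u_m^{\star\star}=\bm d_m^{\star\star}=\0$ from the fact that the fragmented Layer~2 clears no distribution-level bids. Your first two steps are sound and self-contained: dualizing \eqref{eq:power_balance_tso_detail1} does separate Problem \eqref{eq:common_market} into the Layer-1 problems \eqref{eq:layer_dso} (with $c_m^z$ playing the role of the multiplier) plus a residual TSO block, LP strong duality gives $J^{\mathrm{tot}}$ as the sum of the block optima, and your tightness argument correctly forces any common-market-optimal completion of the cleared bids to be Layer-1 optimal for every DSO. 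That is a real structural insight the paper's proof hides inside the citation.

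The proof nevertheless does not close, and the step you yourself flag as ``the main obstacle'' is exactly where it stops being a proof. Summing \eqref{eq:power_balance_dso_detail1}--\eqref{eq:power_balance_dso_detail2} (equivalently, using \eqref{eq:aggregated_balance_dn_tso}) gives $\tilde z_m - z_m^\star = -\1^\top\bm u_m^{\star\star}+\1^\top\bm d_m^{\star\star}$, so the Layer-1 cost increment of the combined point is $\sum_{n}(c_{m,n}^u-c_m^z)u_{m,n}^{\star\star}+\sum_{n}(c_m^z-c_{m,n}^d)d_{m,n}^{\star\star}$; your tightness argument only forces this to vanish, which does not force $\bm u_m^{\star\star}=\bm d_m^{\star\star}=\0$ when some distribution-level FSP is marginal, i.e.\ priced exactly at $c_m^z$ (Layer~2 is then indifferent between clearing it and importing, may clear it, and the combined point remains both common-market optimal and Layer-1 optimal). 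Closing the argument requires either uniqueness of the Layer-1 optimum or a strict non-degeneracy condition on the bid prices relative to $c_m^z$, neither of which you supply. In fairness, the paper's own one-line proof implicitly assumes the same uniqueness when it equates ``the'' common-market solution with ``the'' fragmented one; but as submitted, your argument establishes only that the combined DSO dispatch is \emph{a} Layer-1 optimum, not that it equals $(\bm x_m^{\star},z_m^{\star})$, so the stated conclusion is not reached.
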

			
			{Proposition \ref{prop:optimal_price} shows that under an optimal interface flow price, a necessary condition for an optimal clearing is that no distribution-level bids are cleared in {Layer 2, as the clearing of Layer 1 also returns an optimal interface flow for Layer 2, thus resulting in a common market solution.} Consequently,} the cleared bids  {in the transmission and distribution systems} are grid safe. {However,} obtaining \emph{an optimal price} 
			of the interface flows essentially requires {virtually solving Problem \eqref{eq:common_market}, which is impractical and still requires full knowledge of the distribution grid models in Layer 2.} Furthermore, {in contrary to the idealized sequential market model,} even if an optimal interface flow price is known a priori, non-zero distribution-level bids might still be cleared {in Layer 2} under the {practical sequential} market scheme (Definition \ref{def:sequential}) {due to the exclusion of the distribution-network constraints in the second layer market, as shown in an illustrative example in Section \ref{sec:num_sim}. This fact adds motivation to study approaches to deal with the infeasibility issue {arising} from {this market model.}} 
			
			\section{Corrective Method: A Three-layer Scheme}
			\label{sec:three-layer}
			A straightforward corrective method to the feasibility issue of the sequential market {can be achieved} by introducing {an additional market} layer {(rendering the sequential market scheme three-layered)} that aims at resolving any potential congestion in the distribution systems {that was caused by the solution of Layer 2,} {as conceptualized in} \cite{coordinetD62}. 
			
			\subsection{{Model Formulation}}
			{This method can then be formulated by the addition of the third layer market correction problem.} Specifically, after the TSO forwards the outcome of {Layer 2} to the DSOs, each DSO $m \in \mc N^{\mathrm D}$ solves: 
			\begin{subequations}%\small 
				\begin{align}
					\min_{\bm x_m, z_m} \ \ & \bm c_m^\top \bm x_m \notag \\
					\operatorname{s.t.} \ \ 
					&  A_m \bm \chi_m + B_m z_m = e_m, \label{eq:power_balance_dso_2}  \\
					& \text{\eqref{eq:limit_flow_dso} and }
					z_m  = z_m^{\star\star}, \label{eq:fixed_z} \\
					& \bm \chi_m = \col(\bm u_m+\bm u_m^{\star}+\bm u_m^{\star\star}, \bm d_m+\bm d_m^{\star}+\bm d_m^{\star\star}, \bm p_m), \notag\\
					&  \bm u_m \in [0,\bm u_m^{\max}-\bm u_m^{\star}-\bm u_m^{\star\star}], \notag\\
					&  \bm d_m \in [0,\bm d_m^{\max}-\bm d_m^{\star}-\bm d_m^{\star\star}],   \notag
				\end{align}
				\label{eq:layer3_dso}%
			\end{subequations}%
			where the interface flow is fixed based on the outcome of Layer 2 {as in \eqref{eq:fixed_z}.}  {Note that \eqref{eq:power_balance_dso_2} is obtained from \eqref{eq:power_balance_dso} where $\bm x_m$ is substituted with $\bm \chi_m$.} Let us denote a solution to Problem \eqref{eq:layer3_dso}  by $(\bm x_m^{\mathrm{c}},z_m^{\mathrm{c}})$, which is the correction needed to satisfy the grid constraints. We note that there is no grid safety issue for the transmission system due to the introduction of the third layer corrective market since the interface flows in the third layer are fixed to the interface flow solutions of Layer 2, as described in \eqref{eq:fixed_z}.

			\subsection{{Method Analysis}}
			{By the construction of the third-layer corrective market problem in \eqref{eq:layer3_dso}, a grid-safe cleared bid can be obtained if Problem \eqref{eq:layer3_dso} has a non-empty feasible set. In this case, the outcome of the approach, i.e., $(\bm u_m^{\star}+\bm u_m^{\star\star}+\bm u_m^{\mathrm{c}}, \bm d_m^{\star}+\bm d_m^{\star\star}+\bm d_m^{\mathrm{c}}), \forall m \in \mc N^{\mathrm D},$ and $(\bm u_0^{\star\star},\bm d_0^{\star\star})$ are grid safe as they satisfy all the network constraints. Furthermore, they must be a feasible solution to the idealized sequential market problem (Definition \ref{def:ideal_sequential}), implying that the outcome of the three-layer market scheme can only be as efficient as a set of optimal bids of the idealized sequential market. In other words, the efficiency of this approach is lower bounded by that of the idealized sequential market.} {The suboptimality (extra cost incurred) is due to constraint violations caused by the outcomes of the first two layers that must be rectified. Indeed, in the best scenario, the optimal solution of Layer 3 is $\bm x_m^{\mathrm c}=0$, meaning that no corrective action is needed, resulting in a solution as optimal as the idealized sequential market one.} Furthermore, {the grid-safe condition} implies that, {in practice,} the DSO-layer markets must be liquid enough to resolve any potential congestion caused by the clearing of Layer 2. Nevertheless, Problem \eqref{eq:layer3_dso} can be infeasible, implying that the outcome of the sequential market violates grid constraints in \eqref{eq:limit_flow_dso}, for some distribution systems, and these constraint violations cannot be corrected by the remaining DSO-located bids from Layer 2. In that case, the DSO needs to resort to alternative congestion management solutions, which could be more disruptive, such as load curtailment. 
			
			In terms of computational complexity, this method requires solving $|\mc{N}^D|$ more problems, as compared to the sequential market problem, as shown in Remark 1. As these problems are LP problems, this additional computational burden can be limited due to the efficiency of solving LP problems.
			\begin{remark}
				The extra layer requires each DSO to solve an LP, implying that the total number of LPs that must be solved is $2|\mc N^{\mathrm D}|+1$ (twice for each DSO and once for the TSO).
			\end{remark}
			
			{From a practical perspective, this method follows full market-based mechanisms, in line with, e.g., general EU policy recommendations \cite{eu2019-943} without requiring interference through bid selection nor filtering, which enhances its application potential in practice, especially in settings of high levels of distributed-flexibility participation.} However, market timings can be challenging for implementing the corrective market layer. For instance, if Layer 2 is performed 15 minutes before real-time, as in some balancing markets in Europe \cite{elia2020balancing_rules}, the DSO is left with little time to gather updated data and procure the corrective flexibility. In addition to the potential requirements of last-minute corrective actions to be taken by the DSO after the clearing of Layer 2 and before real-time operation (at instances leaving a window of less than 15 minutes for corrective actions in the third layer), the risks of having scarce flexible capacity in the distribution system, limiting the potential of corrective actions to resolve all grid violations that could be induced from Layer 2,  induce challenges to the practical application of this method. Such challenges render the method more suitable for settings with adequate distribution flexibility market liquidity and flexibility products timelines that allow appropriate time for corrective actions in Layer 3 to be procured and implemented.

			\section{{Bid Prequalification} Method}
			\label{sec:bid_filtering}

			To limit the need for corrective actions, the DSOs can filter/prequalify the bids that will be forwarded to Layer 2. This approach has been initially conceptualized in \cite{interrface} and {we propose an extension applicable for the considered market setup.} By discarding any bids that might result in congestion in the distribution systems, {this method aims to} ensure that the bids cleared in Layer 2 satisfy the DSOs' grid constraints. 
			
			\subsection{{Model Formulation}}
			\begin{algorithm}[!t]
				\caption{Bid filtering method.}
				\label{alg:bid_filtering}
				Filter upward bids in $\mc U_m=: \mc U_m^{(1)}$. 
				For $\ell = 1,2,\dots, |\mc U_m|$:
				\begin{enumerate}
					\item Solve the feasibility problem:
					\begin{equation}%\small
						\begin{aligned}
							\operatorname{find}  \ \ & {(\bm x_m, z_m)}  \\
							\operatorname{s.t.}  \ \ & \text{ \eqref{eq:limit_flow_dso}, \eqref{eq:limit_z_dso}, and \eqref{eq:power_balance_dso_2},}\\
							& \bm \chi_m = \col(\bm u_m+\bm u_m^{\star}, \bm d_m+\bm d_m^{\star}, \bm p_m),\\
							&  u_{m,n} \in [0, u_{m,n}^{\max}- u_{m,n}^{\star}], \ \forall n \in \mc U_m^{(\ell)}, \\
							&  \bm d_m = \0, \ u_{m,n} = 0, \ \forall n \in \mc U_m^{(1)}\setminus \mc U_m^{(\ell)}. 
						\end{aligned}
						\label{eq:upward_filter}%
					\end{equation}%
					\item If Problem \eqref{eq:upward_filter} is infeasible, discard the most expensive bid,  $v_m^{(\ell)} \in \operatorname{argmax}_{v \in \mc U_m^{(\ell)}} c_{m,v}^u$, i.e., $\mc U_m^{(\ell+1)}= \mc U_m^{(\ell)}\setminus \{v_m^{(\ell)}\}$. Otherwise, stop the iterations and denote the filtered set of upward bids by $\mc U_m^{\mathrm f} := \mc U_m^{(\ell)}$.
				\end{enumerate}
				Filter downward bids in $\mc D_m=: \mc D_m^{(1)}$. 
				For $\ell = 1,2,\dots , |\mc D_m|$:
				\begin{enumerate}
					\item Solve the feasibility problem:
					\begin{equation}%\small
						\begin{aligned}
							\operatorname{find}  \ \ & {(\bm x_m, z_m)}  \\
							\operatorname{s.t.} \ \ & \text{ \eqref{eq:limit_flow_dso}, \eqref{eq:limit_z_dso}, and \eqref{eq:power_balance_dso_2},}\\
							& \bm \chi_m = \col(\bm u_m+\bm u_m^{\star}, \bm d_m+\bm d_m^{\star}, \bm p_m),\\
							&  d_{m,n} \in [0, d_{m,n}^{\max}- d_{m,n}^{\star}], \forall n \in \mc D_m^{(\ell)}, \\
							&  \bm u_m = \0, \ d_{m,n} = 0, \forall n \in \mc D_m^{(1)}\setminus \mc D_m^{(\ell)}.
						\end{aligned}
						\label{eq:downward_filter}%
					\end{equation}%
					\item If Problem \eqref{eq:downward_filter} is infeasible, discard the least expensive bid, $v_m^{(\ell)}  \in \operatorname{argmin}_{v \in \mc D_m^{(\ell)}} c_{m,v}^d$, i.e., $\mc D_m^{(\ell+1)}= \mc D_m^{(\ell)}\setminus \{v_m^{(\ell)}\}$. Otherwise, stop the iterations and denote the filtered set of upward bids by $\mc D_m^{\mathrm f} := \mc D_m^{(\ell)}$.
				\end{enumerate}
			\end{algorithm}
			
			{We formally define this approach in Algorithm \ref{alg:bid_filtering},} performed by each DSO after obtaining a solution {to} its clearing problem, $(\bm x_m^{\star}, z_m^{\star})$  {(Layer 1), thus after meeting its own flexibility needs, to decide on which remaining (portions of) distributed flexibility bids can be safely forwarded to the TSO-level market.}  {The main idea of the proposed Algorithm \ref{alg:bid_filtering} is to iteratively filter out the costliest bid one by one until the remaining bids are grid safe. } The filtering of the upward and downward bids are done separately albeit done in the same manner. 		
			Based on the outcome of Algorithm \ref{alg:bid_filtering}, i.e., the sets of filtered upward and downward bids denoted by, respectively, $\mc U_m^{\mathrm f}$ and $\mc D_m^{\mathrm f}$, the market clearing problem of Layer 2 in \eqref{eq:layer_tso} is modified by replacing \eqref{eq:limit_u_d_dn_tso} with 
			\begin{align}%\small
				\begin{cases}
					u_{m,n} \in [0, u_{m,n}^{\max}- u_{m,n}^{\star}],\ \forall n \in \mc U_m^{\mathrm f}, \forall m \in \mc N^{\mathrm D}, \\
					d_{m,n} \in [0, d_{m,n}^{\max}- d_{m,n}^{\star}],\ \forall n \in \mc D_m^{\mathrm f}, \forall m \in \mc N^{\mathrm D}, \\
					u_{m,n} = 0, \ \forall n \in \mc U_m \setminus \mc U_m^{\mathrm f}, \forall m \in \mc N^{\mathrm D},  \\
					d_{m,n} = 0, \ \forall n \in \mc D_m \setminus \mc D_m^{\mathrm f}, \forall m \in \mc N^{\mathrm D}.
				\end{cases}  
				\label{eq:limit_u_d_dn_tso_2}
			\end{align}

			\subsection{{Method Analysis}}
			{As shown next,} under some mild assumptions on the bid prices and the structure of the distribution networks, which are typically satisfied in practice\footnote{Assumption \ref{as:bid_prices}, i.e., having more expensive upward bids than downward ones is common in practice, e.g., in balancing markets. Meanwhile, Assumption \ref{as:radial_dn} considers the most common structure of distribution systems.}, we {can} guarantee that by implementing Algorithm \ref{alg:bid_filtering}, the outcome of the sequential market is safe both for the TSO and DSOs. {We also characterize the lower and upper suboptimality bounds of Algorithm \ref{alg:bid_filtering}.} 
			\begin{assumption}
				\label{as:bid_prices}
				For each $m \in \mc N^{\mathrm D}$, the bid prices of FSPs in DSO-$m$ follow that
				$\max_{n \in \mc D_m} c_{m,n}^d < \min_{n \in \mc U_m} c_{m,n}^u.$
			\end{assumption}
			\begin{assumption}
				\label{as:radial_dn}
				For each $m \in \mc N^{\mathrm D}$, the distribution network $m$ is radial.
			\end{assumption}
			\begin{proposition}
				\label{prop:grid_safe_alg1}
				Let Assumptions \ref{as:bid_prices}--\ref{as:radial_dn} hold. Let us consider Problem \ref{eq:layer_tso} where \eqref{eq:limit_u_d_dn_tso} is replaced with \eqref{eq:limit_u_d_dn_tso_2}, in which $\mc U_m^{\mathrm f}$ and $\mc D_m^{\mathrm f}$ are obtained from Algorithm \ref{alg:bid_filtering}. Then, these statements hold:
				\begin{enumerate}[label=\roman*]
					\item \label{prop:grid_safe_alg1_point} The cleared bids $(\bm u_m^{\star}+\bm u_m^{\star\star}, \bm d_m^{\star}+\bm d_m^{\star\star}), \forall m \in \mc N^{\mathrm D},$ and $(\bm u_0^{\star\star},\bm d_0^{\star\star})$ are grid safe.
					\item \label{prop:subopt_alg1_1} {If $\mc U_m^{\mathrm f} = \mc U_m$ and $\mc D_m^{\mathrm f} = \mc D_m$, for all $m \in \mc N^{\mathrm D}$, then the cleared bids are an optimal solution to the \emph{idealized} sequential market  {(Definition \ref{def:ideal_sequential}).}}
					\item \label{prop:subopt_alg1_2} {If $\mc U_m^{\mathrm f} = \emptyset$ and $\mc D_m^{\mathrm f} = \emptyset$, for all $m \in \mc N^{\mathrm D}$, then the cleared bids are an optimal solution to the fragmented market  {(Definition \ref{def:fragmented}).}} 
				\end{enumerate}
			\end{proposition}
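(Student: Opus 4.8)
I would establish item~(\ref{prop:grid_safe_alg1_point}) first, as it carries the bulk of the argument and the feasibility facts needed for the other two items follow from it. The starting point is Assumption~\ref{as:radial_dn}: for a radial network the sensitivity matrix $C_m$ has a $\{0,\pm1\}$ structure with a single sign per line, so the flow on each line of DSO~$m$ equals the signed sum of the net injections $p_{m,k}$ over the sub-tree that the line separates from the root, hence is an affine function of the activated bid volumes that is nondecreasing in every upward volume and nonincreasing in every downward volume. Because node~$1$ is the root, no line flow depends on $p_{m,1}$, so the interface variable $z_m$ (which appears only in the root balance) can be chosen freely in $[z_m^{\min},z_m^{\max}]$, with $p_{m,1}$ absorbing the slack; therefore the cleared bids of DSO~$m$ are grid safe for its own grid exactly when the induced line flows lie in $[\bm f_m^{\min},\bm f_m^{\max}]$. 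Next, using Assumption~\ref{as:bid_prices} and optimality of the modified Layer-2 problem, I would show that no DSO has both $\bm u_m^{\star\star}\neq\0$ and $\bm d_m^{\star\star}\neq\0$: decreasing one unit of each preserves \eqref{eq:aggregated_balance_dn_tso} and every transmission-side constraint while changing the objective by a strictly negative amount since $\max_n c_{m,n}^d<\min_n c_{m,n}^u$, contradicting optimality; so for each $m$ either $\bm d_m^{\star\star}=\0$ or $\bm u_m^{\star\star}=\0$. In the first case the cleared distribution bids of $m$ are $(\bm u_m^\star+\bm u_m^{\star\star},\bm d_m^\star)$ with $\bm u_m^{\star\star}$ supported on $\mc U_m^{\mathrm f}$ and bounded by the residual caps; $\mc U_m^{\mathrm f}$ is retained precisely because \eqref{eq:upward_filter} certifies that forwarding those bids (up to their residual caps, with the downward side held at its Layer-1 level) does not breach the distribution flow limits, and combined with the monotone flow representation this yields that every such admissible activation is grid feasible --- the maximal activation being the binding case for the $\bm f_m^{\max}$ limits and the Layer-1 point $(\bm x_m^\star,z_m^\star)$ already respecting the $\bm f_m^{\min}$ limits. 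The case $\bm u_m^{\star\star}=\0$ is symmetric through \eqref{eq:downward_filter} and $\mc D_m^{\mathrm f}$. Finally I would assemble a global dispatch witnessing Definition~\ref{def:grid_safe} by keeping the Layer-2 quantities $(\bm x_0^{\star\star},(z_m^{\star\star})_m)$ on the transmission network and interfaces (these satisfy \eqref{eq:power_balance_tso_detail1}--\eqref{eq:power_balance_tso_detail2} and \eqref{eq:limit_flow_tso} by construction) and reconstructing each $\bm p_m$ from the distribution nodal balances, which is consistent since $z_m^{\star\star}$ already satisfies \eqref{eq:aggregated_balance_dn_tso}.

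For item~(\ref{prop:subopt_alg1_1}): if nothing is filtered then \eqref{eq:limit_u_d_dn_tso_2} reduces to \eqref{eq:limit_u_d_dn_tso}, so the modified Layer~2 is the original Problem~\eqref{eq:layer_tso}; since the idealized Layer~2 is obtained from \eqref{eq:layer_tso} by \emph{adding} the distribution-network constraints \eqref{eq:power_balance_dso}--\eqref{eq:limit_flow_dso}, its feasible region is contained in that of the modified Layer~2 and its optimal value is at least as large. By item~(\ref{prop:grid_safe_alg1_point}) the obtained clearing is grid safe, hence --- extended by the $\bm p_m$ built there --- feasible for the idealized Layer~2, so its value is also at most the idealized optimal value; it is therefore optimal for the idealized Layer~2, and since Layer~1 is identical in the two models the cleared bids form an optimal solution of the idealized sequential market (Definition~\ref{def:ideal_sequential}). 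For item~(\ref{prop:subopt_alg1_2}): if $\mc U_m^{\mathrm f}=\mc D_m^{\mathrm f}=\emptyset$ for all $m$, then \eqref{eq:limit_u_d_dn_tso_2} forces $\bm u_m=\0$, $\bm d_m=\0$ for all $m$, which is exactly the substitution in Definition~\ref{def:fragmented}; with Layer~1 unchanged, the cleared bids are verbatim a solution of the fragmented market.

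The main obstacle is item~(\ref{prop:grid_safe_alg1_point}): bridging the gap between the certificates produced by Alg.~\ref{alg:bid_filtering}, which are computed separately for upward-only and for downward-only forwarded activations, and the mixed activation pattern actually cleared in Layer~2. Both assumptions are essential here --- Assumption~\ref{as:bid_prices} to rule out opposing simultaneous activations within a DSO, and Assumption~\ref{as:radial_dn} to provide the monotone, $z_m$-independent flow description that lets feasibility at the extreme activations propagate to the entire activation box --- and one must also check that the reconstruction of $\bm p_m$ together with the free choice of $z_m$ in the last step remain compatible with the transmission-side values $z_m^{\star\star}$, which again relies on $p_{m,1}$ not entering the distribution flow constraints.
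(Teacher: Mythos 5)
Your proposal is correct and follows essentially the same route as the paper's proof: the same exchange argument under Assumption \ref{as:bid_prices} establishing that upward and downward distribution-level bids cannot be simultaneously cleared in Layer 2 (the paper isolates this as a lemma), the same use of radiality so that feasibility of the extreme one-sided activations certified by Alg. \ref{alg:bid_filtering} propagates by monotonicity/linearity to every cleared activation, and the same feasible-set arguments for items (ii) and (iii). Your treatment is somewhat more explicit about reconstructing the nodal injections $\bm p_m$ and the compatibility with $z_m^{\star\star}$, which the paper leaves implicit, but the underlying argument is identical.
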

						
			Proposition \ref{prop:grid_safe_alg1}.\ref{prop:grid_safe_alg1_point} provides a guarantee on the grid-safe outcome of the bid filtering method. Furthermore, in terms of market efficiency, in the best case, similar to the three-layer market method, the solution is as efficient as that of the idealized sequential market (Proposition \ref{prop:grid_safe_alg1}.\ref{prop:subopt_alg1_1}), and this occurs when all bids are forwarded. In the worst case, when no bid is forwarded, the solution is as efficient as that of the fragmented market (Proposition \ref{prop:grid_safe_alg1}.\ref{prop:subopt_alg1_2}). Therefore, in general, the market efficiency of this approach is between the idealized sequential and the fragmented market schemes.
				\begin{remark}
				\label{rem:computation_comp_alg1}
				Algorithm \ref{alg:bid_filtering} requires each DSO to solve at least $2$ LPs, {implying all remaining bids can be forwarded (the case in Proposition \ref{prop:grid_safe_alg1}.\ref{prop:subopt_alg1_1})} and at most $|\mc U_m|+|\mc D_m|$ LPs, {implying at most only two bids are forwarded or no bids can be forwarded (the case in Proposition \ref{prop:grid_safe_alg1}.\ref{prop:subopt_alg1_2}).} %Thus, this method requires solving $\sum_{m \in \mc N^{\mathrm D}} (|\mc U_m|\hspace{-2pt}+\hspace{-2pt}|\mc D_m|) \hspace{-2pt}+\hspace{-2pt}|\mc N^{\mathrm D}|\hspace{-2pt}+\hspace{-2pt}1$ LPs in the worst case. 
			\end{remark}
			
			The computational load of the method is described in Remark \ref{rem:computation_comp_alg1}. Since the upper bound of the computational load is linearly proportional to the number of bids, this can be a practical limitation for networks with a large number of flexibility resources.   	
		
		For transmission-level markets, the EU electricity balancing guideline \cite{eu2017-2195} foresees a role for the TSO of declining the use of certain balancing flexibility due to grid concerns. Extending this ability to DSOs would provide regulatory support for the bid filtering method next to its implementation simplicity. Nonetheless, the process of bid filtering must be transparently implemented and explained, to provide market participants with full transparency on the reasons for disqualifying bids. The absence of this transparency would lead to a low participation potential of FSPs, leading to the potential failure of the associated flexibility market initiative. As DSOs are increasingly adapting their processes of prequalification, such bid filtering aspects can then be integrated in dynamic prequalification processes, with clearly defined rules and conditions.
	
	\section{Bid Aggregation Method}
	\label{sec:bid_aggregation}
	
	The second preventive method is a bid aggregation approach {{originally conceptualized} in the Smartnet project \cite{smartnet} and presented in \cite{papavasiliou2018coordination,papavasiliou2020hierarchical,mezghani2021coordination}.} {We further develop this method and extend its application to our DSO-TSO coordinated setting.}
	
\subsection{{Model Formulation}}
In this method, first, each DSO clears its local market for a number of discrete interface flow values. Then, it constructs a {discretized} {Residual Supply Function} (RSF), which is a one-to-one correspondence between the interface flow values and the optimal costs of Layer 1 for the corresponding fixed interface flow values. Then, the DSOs forward the discrete set of interface flow values {generated,} { $\mc Z_m^{\mathrm{dsc}}$,} and the RSF, {forming a step-wise bidding function, where each step of this function can be safely cleared as it was generated taking into account the distribution grid constraints.} With this knowledge, the TSO clears its market by considering the discrete interface flows (and their RSFs) {-- generated through the aggregation of the distribution bids --} and transmission-level bids. Once a solution to this problem is obtained, the TSO informs each DSO the selected interface flow {step} and, in turn, each DSO selects the optimal distribution-level bid {clearing solution} that corresponds to this {step,} as cleared in the first stage. This method requires modifications on both layers of the sequential market and is summarized in Algorithm \ref{alg:bid_aggregation}. 	

\begin{algorithm}[!t]
	\caption{Bid aggregation method}
	\label{alg:bid_aggregation}
	\begin{enumerate}
		\item Each DSO-$m$, for each $m \in \mc N^{\mathrm D}$, defines the ordered discrete set of feasible interface flow, $\mc Z_m^{\mathrm{dsc}} \subset [z_m^{\min},z_m^{\max}]$, where $|\mc{Z}_m^{\mathrm{dsc}}| =: N_{\mc Z,m} < \infty$.
		\item Each DSO-$m$ clears its local market with fixed interface flow values, i.e. for each $\hat{z}_{m} \in \mc Z_m^{\mathrm{dsc}}$, it solves Problem \eqref{eq:layer_dso} with $c_m^z=0$ in \eqref{eq:cost_dso}, and \eqref{eq:limit_z_dso} being replaced with $z_m = \hat{z}_{m}$. If this problem is infeasible, $\hat z_{m}$ is discarded. Otherwise, it collects the pair $(\hat z_{m,k},J_{m,k})$, where $J_{m,k}$ denotes the optimal cost value. 
		\item Let the discrete set of feasible interface flow values be denoted by $\mc Z_m^{\mathrm{dsc},\star} \subseteq \mc Z_m^{\mathrm{dsc}}$. Then, each DSO-$m$ obtains a residual supply function $ f^{\mathrm{r}}_m \colon \mc Z_m^{\mathrm{dsc},\star} \mapsto \mathbb R:=J_{m,k}(\hat z_{m})$.
		\item The TSO clears its market by solving:
		\begin{subequations}%\small
			\begin{align}
				\min_{\bm x_0, (\bm x_m, z_m)_{m \in \mc N^{\mathrm D}}} \ \ & \bm c_0^\top \bm x_0 + \textstyle\sum_{m \in \mc N^{\mathrm{D}}}f_m^{\mathrm{r}}(z_m) \notag \\
				\operatorname{s.t.} \ \ & \text{\eqref{eq:power_balance_tso_detail1}, \eqref{eq:power_balance_tso_detail2}, \eqref{eq:limit_flow_tso}, \eqref{eq:limit_u_d_tso}, } \notag \\
				& z_m \in \mc Z_m^{\mathrm{dsc},\star}, \ \forall m \in \mc N^{\mathrm D}. \label{eq:limit_z_discrete}
			\end{align}%
			\label{eq:layer_tso_rsf}%
		\end{subequations}%

		\item The TSO informs the optimal interface flow $z_m^{\star\star}$ to each DSO $m \in \mc N^{\mathrm{D}}$ and then each DSO $m$ clear bids based on the solution in Step 2 given $z_m^{\star\star}$.
	\end{enumerate}
\end{algorithm}

Algorithm \ref{alg:bid_aggregation} is different from the RSF-based methods discussed in \cite{papavasiliou2018coordination,papavasiliou2020hierarchical,mezghani2021coordination} mainly in the way the RSFs are constructed. While \cite{papavasiliou2018coordination} considers a continuous linear approximation of the RSF, the papers \cite{papavasiliou2020hierarchical,mezghani2021coordination} use a step-wise price based on the subgradient of the RSFs, i.e., the dual optimal values of the constraint $z_m = \hat z_m$ in the problem solved in Step 2 of Algorithm \ref{alg:bid_aggregation}. Instead, we use the discretized RSFs, which enable controlling the optimality of the market outcome as we show in the next section. 
	The efficiency gains of the proposed method will be showcased in the numerical comparison in Section \ref{sec:num_sim}.	

	\subsection{{Method Analysis}}
	As shown next, we can prove that this bid aggregation method solves a restricted common market clearing problem and, thus, we can obtain a theoretical optimality bound, which is not provided in  \cite{papavasiliou2018coordination,papavasiliou2020hierarchical,mezghani2021coordination}.  {However, let us first} guarantee that the outcome of Algorithm \ref{alg:bid_aggregation} is always grid safe, as formally stated in the next proposition.	
	\begin{proposition}\label{prop:grid_safe_alg2}
The cleared bids obtained by Algorithm \ref{alg:bid_aggregation} are grid safe.
\end{proposition}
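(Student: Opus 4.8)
The plan is to verify Definition \ref{def:grid_safe} directly by producing an explicit feasibility witness assembled entirely from the quantities that Alg. \ref{alg:bid_aggregation} already computes. Let $(z_m^{\star\star})_{m\in\mc N^{\mathrm D}}$ denote the optimal interface flows returned by Step 4, namely the minimizer of Problem \eqref{eq:layer_tso_rsf}, and let $\bm x_0^{\star\star}=\col(\bm u_0^{\star\star},\bm d_0^{\star\star},\bm p_0^{\star\star})$ be the accompanying transmission-level solution. For each $m\in\mc N^{\mathrm D}$, let $(\bm u_m^{\mathrm a},\bm d_m^{\mathrm a},\bm p_m^{\mathrm a})$ be the solution of Problem \eqref{eq:layer_dso} (with $c_m^z=0$ and \eqref{eq:limit_z_dso} replaced by $z_m=z_m^{\star\star}$) that DSO-$m$ uses in Step 5 to clear its bids. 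The cleared bids are then $(\bm u_m^{\mathrm a},\bm d_m^{\mathrm a})$ for $m\in\mc N^{\mathrm D}$ and $(\bm u_0^{\star\star},\bm d_0^{\star\star})$, and I would take as witness in Definition \ref{def:grid_safe} the vector $\bm z=\col((z_m^{\star\star})_{m\in\mc N^{\mathrm D}})$ together with $\bm p_m=\bm p_m^{\mathrm a}$ for $m\in\mc N^{\mathrm D}$ and $\bm p_0=\bm p_0^{\star\star}$.

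Next I would check the required constraints in two blocks. On the distribution side, for each $m$ the tuple $(\bm u_m^{\mathrm a},\bm d_m^{\mathrm a},\bm p_m^{\mathrm a},z_m^{\star\star})$ is feasible for the Step 2 problem; since fixing $z_m=z_m^{\star\star}$ removes only the box constraint \eqref{eq:limit_z_dso} and not the nodal balances \eqref{eq:power_balance_dso_detail1}--\eqref{eq:power_balance_dso_detail2} or the line-flow limits \eqref{eq:limit_flow_dso}, these all hold; and \eqref{eq:limit_z_dso} itself holds because $z_m^{\star\star}\in\mc Z_m^{\mathrm{dsc},\star}\subseteq\mc Z_m^{\mathrm{dsc}}\subset[z_m^{\min},z_m^{\max}]$ by Step 1. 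On the transmission side, $(\bm x_0^{\star\star},(z_m^{\star\star})_{m\in\mc N^{\mathrm D}})$ is feasible for Problem \eqref{eq:layer_tso_rsf}, which lists \eqref{eq:power_balance_tso_detail1}, \eqref{eq:power_balance_tso_detail2} and \eqref{eq:limit_flow_tso} among its constraints, and the interface terms $z_{\phi(k)}$ appearing in \eqref{eq:power_balance_tso_detail1} are precisely the entries of the witness $\bm z$. Assembling the two blocks shows that every constraint enumerated in Definition \ref{def:grid_safe} is satisfied, hence the cleared bids are grid safe.

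I do not anticipate a genuine obstacle; the argument is essentially bookkeeping, and the one point I would emphasize is the consistency of the coupling variable across the two layers. The TSO in Step 4 optimizes over the \emph{same} discrete sets $\mc Z_m^{\mathrm{dsc},\star}$ from which the RSFs were built, and in Step 5 each DSO re-selects its dispatch $\bm p_m^{\mathrm a}$ at exactly the value $z_m^{\star\star}$ chosen by the TSO, so the single vector $\bm z=\col((z_m^{\star\star})_{m\in\mc N^{\mathrm D}})$ simultaneously certifies the distribution balances \eqref{eq:power_balance_dso_detail1}--\eqref{eq:power_balance_dso_detail2} and the transmission balances \eqref{eq:power_balance_tso_detail1}. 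A secondary remark is that the restriction to $\mc Z_m^{\mathrm{dsc},\star}$ (the interface values for which Step 2 is feasible) is exactly what guarantees existence of the dispatch $\bm p_m^{\mathrm a}$ used in the witness; as with the other market models in the paper, we implicitly assume Problem \eqref{eq:layer_tso_rsf} is feasible so that the algorithm returns cleared bids at all.
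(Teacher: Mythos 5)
Your proposal is correct and follows essentially the same route as the paper's proof: the paper likewise argues that the transmission constraints are enforced directly in Problem \eqref{eq:layer_tso_rsf}, that $\mc Z_m^{\mathrm{dsc},\star}\subset[z_m^{\min},z_m^{\max}]$, and that the cleared distribution-level bids inherit feasibility from the Step-2 subproblems at the fixed interface flow. Your version is simply a more explicit write-up of the same argument, with the feasibility witness and the cross-layer consistency of $z_m^{\star\star}$ spelled out rather than left implicit.
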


Now, we study the efficiency of the solution obtained by Algorithm \ref{alg:bid_aggregation} with respect to the common market solution. {To this end, first we define the RSF step size, {needed for our suboptimality bound.}}
\begin{definition}
\label{def:step_rsf}
For each $m \in \mc N^D$, let us consider the ordered discrete set $\mc Z_m^{\mathrm{dsc}}:=
\{z_{m,1}, z_{m,2}, \dots, z_{m,{N_{\mc Z,m}}} \}$, where $z_{m,1}<\dots< z_{m,{N_{\mc Z,m}}}$. We denote by $\delta_m$ the largest distance between two consecutive elements, i.e., $$\delta_m = \max_{k \in \{2,\dots,{N_{\mc Z,m}}\}} |z_{m,k}-z_{m,k-1} |.$$
\end{definition}

We next show that the outcome of {the bid aggregation method} (Algorithm \ref{alg:bid_aggregation}) is a feasible but possibly suboptimal solution to the common market problem in \eqref{eq:common_market}.
\begin{proposition} 
\label{cor:lower_bound_rsf_optimality}
The optimal value of the common market problem in \eqref{eq:common_market} is a  {tight} lower bound to the optimal value of Problem \eqref{eq:layer_tso_rsf} in Algorithm \ref{alg:bid_aggregation}.
\end{proposition}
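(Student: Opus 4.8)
The plan is to recognize Problem \eqref{eq:layer_tso_rsf} as exactly the common market problem \eqref{eq:common_market} with the interface flows $z_m$ confined to the finite sets $\mc Z_m^{\mathrm{dsc},\star}$; once this is established, the lower bound is immediate and tightness follows by choosing the discretization appropriately. The first step is to unpack the residual supply function: because Step~2 of Alg.~\ref{alg:bid_aggregation} fixes $z_m = \hat z_m$ and sets $c_m^z = 0$, the value $f_m^{\mathrm r}(\hat z_m)$ is precisely the optimal value of the program that minimizes $\bm c_m^\top \bm x_m$ over $\bm x_m$ subject to $A_m\bm x_m + B_m\hat z_m = \bm e_m$ together with the distribution-level bounds \eqref{eq:limit_flow_dso} and \eqref{eq:limit_u_d_dso}; moreover $\mc Z_m^{\mathrm{dsc},\star}$ is, by construction, exactly the set of discretization points at which this inner program is feasible (the remaining ones being discarded).

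Next I would substitute this characterization into the objective of Problem \eqref{eq:layer_tso_rsf} and merge the DSO-side minimizations over the $\bm x_m$'s with the TSO-side minimization over $\bm x_0$ and $(z_m)_{m\in\mc N^{\mathrm D}}$. This partial-minimization step shows that Problem \eqref{eq:layer_tso_rsf} is equivalent to minimizing $\bm c_0^\top \bm x_0 + \sum_{m\in\mc N^{\mathrm D}}\bm c_m^\top \bm x_m$ over $\bm x_0$ and $(\bm x_m,z_m)_{m\in\mc N^{\mathrm D}}$ subject to the transmission constraints \eqref{eq:power_balance_tso_detail1}, \eqref{eq:power_balance_tso_detail2}, \eqref{eq:limit_flow_tso}, \eqref{eq:limit_u_d_tso}, the distribution constraints \eqref{eq:power_balance_dso}, \eqref{eq:limit_flow_dso}, \eqref{eq:limit_u_d_dso} for every $m$, and $z_m\in\mc Z_m^{\mathrm{dsc},\star}$ for every $m$. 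Comparing with \eqref{eq:common_market}, the objective is $J^{\mathrm{tot}}$ and the constraints coincide, except that \eqref{eq:limit_z_dso}, namely $z_m\in[z_m^{\min},z_m^{\max}]$, is tightened to $z_m\in\mc Z_m^{\mathrm{dsc},\star}\subseteq[z_m^{\min},z_m^{\max}]$. Hence the feasible set of Problem \eqref{eq:layer_tso_rsf}, in this equivalent form, is a subset of that of \eqref{eq:common_market}, so its optimal value is no smaller than the optimal value of \eqref{eq:common_market}; this is the lower-bound claim.

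For tightness, I would take a common-market optimizer $(\bm x_0^\circ,(\bm x_m^\circ,z_m^\circ)_{m\in\mc N^{\mathrm D}})$ and note that if the discretization in Step~1 is chosen so that $z_m^\circ\in\mc Z_m^{\mathrm{dsc}}$ for all $m$, which is always possible, then $z_m^\circ$ is not discarded, since $\bm x_m^\circ$ is feasible for the DSO program of Step~2 with $z_m=z_m^\circ$; hence $z_m^\circ\in\mc Z_m^{\mathrm{dsc},\star}$. Plugging $(\bm x_0^\circ,(\bm x_m^\circ,z_m^\circ)_{m})$ into the equivalent form from the previous step attains the optimal value of \eqref{eq:common_market}, so for that discretization the lower bound is met with equality, which proves tightness.

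The only step that is not pure bookkeeping is the partial-minimization equivalence: one has to verify that merging the inner minimizations over the $\bm x_m$'s, which define the RSFs, with the outer minimization neither adds nor removes attainable objective values. This is exactly where it matters that $\mc Z_m^{\mathrm{dsc},\star}$ is the projection onto the $z_m$-coordinate of the distribution-level feasible set intersected with the chosen discretization, so that no interface-flow value admissible in \eqref{eq:common_market} is spuriously lost and none is spuriously added. Everything else reduces to set inclusion and substitution.
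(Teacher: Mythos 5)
Your proposal is correct and follows essentially the same route as the paper: the partial-minimization/decomposition step you describe is exactly the content of the paper's Lemma~\ref{le:rsf_solves_MILP} (showing Alg.~\ref{alg:bid_aggregation} solves the common market problem with $z_m$ restricted to the discrete sets), after which the lower bound follows from the feasible-set inclusion (the paper phrases this as \eqref{eq:common_market} being a convex relaxation) and tightness from the case $z_m^\circ\in\mc Z_m^{\mathrm{dsc}}$. Your explicit attention to the distinction between $\mc Z_m^{\mathrm{dsc}}$ and $\mc Z_m^{\mathrm{dsc},\star}$ is a minor refinement the paper handles inside the lemma's proof by noting infeasibility of the discarded points.
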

Furthermore, this suboptimality of the solution obtained by Algorithm \ref{alg:bid_aggregation} depends on the RSF step size, as shown next.

\begin{theorem}
\label{th:suboptimality_rsf}
The suboptimality of the sequential market with Algorithm \ref{alg:bid_aggregation} with respect to the optimal value of the common market problem in \eqref{eq:common_market}  is $\mc O(\bar \delta)$, where $\bar \delta = \max_{m \in \mc N^D} \delta_m$. 
\end{theorem}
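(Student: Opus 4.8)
The plan is to rewrite \emph{both} the common market value and the cost of the bids cleared by Alg. \ref{alg:bid_aggregation} as the minimum of one and the same convex piecewise‑linear function of the interface‑flow vector $\bm z$ — over the continuous box in the first case, over the discrete grid $\prod_m \mc Z_m^{\mathrm{dsc},\star}$ in the second — and then to round the continuous minimizer onto the grid, paying a price that is Lipschitz in the rounding error $\sum_m\delta_m$.

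\textbf{Step 1 (reformulation).} For a fixed $\bm z$, Problem \eqref{eq:common_market} decouples: the TSO part minimizes $\bm c_0^\top\bm x_0$ subject to \eqref{eq:power_balance_tso_detail1}, \eqref{eq:power_balance_tso_detail2}, \eqref{eq:limit_flow_tso}, \eqref{eq:limit_u_d_tso} ($\bm z$ enters only the right‑hand side of \eqref{eq:power_balance_tso_detail1}); each DSO‑$m$ part minimizes $\bm c_m^\top\bm x_m$ subject to \eqref{eq:power_balance_dso_detail1}, \eqref{eq:power_balance_dso_detail2}, \eqref{eq:limit_flow_dso}, \eqref{eq:limit_u_d_dso} with $z_m$ fixed, which is exactly the problem solved in Step 2 of Alg. \ref{alg:bid_aggregation} (with $c_m^z=0$), so its optimal value equals $f_m^{\mathrm r}(z_m)=:V_m(z_m)$ on feasible discrete points. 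Writing $g(\bm z)$ for the TSO part's value, $\mc F_m:=[z_m^{\mathrm{lo}},z_m^{\mathrm{hi}}]$ for the (nonempty, by assumption) interval of interface flows making \eqref{eq:layer_dso} feasible, $J^{\mathrm{tot}}$ for the optimal value of \eqref{eq:common_market}, and $J^{\mathrm{agg}}$ for the total cost of the bids cleared by Alg. \ref{alg:bid_aggregation} (which equals the optimal value of \eqref{eq:layer_tso_rsf}, since Step 5 recovers the Step‑2 cost $f_m^{\mathrm r}(z_m^{\star\star})$), one gets
\[
J^{\mathrm{tot}}=\min_{z_m\in\mc F_m,\ \forall m}\ \Big(g(\bm z)+\textstyle\sum_{m\in\mc N^{\mathrm D}}V_m(z_m)\Big),\qquad
J^{\mathrm{agg}}=\min_{z_m\in\mc Z_m^{\mathrm{dsc},\star},\ \forall m}\ \Big(g(\bm z)+\textstyle\sum_{m\in\mc N^{\mathrm D}}V_m(z_m)\Big).
\]
Since $\mc Z_m^{\mathrm{dsc},\star}\subseteq\mc F_m$, this re‑derives $J^{\mathrm{agg}}\ge J^{\mathrm{tot}}$ (Proposition \ref{cor:lower_bound_rsf_optimality}), so it remains to upper bound $J^{\mathrm{agg}}-J^{\mathrm{tot}}$.

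\textbf{Step 2 (Lipschitz estimate via parametric LP).} The optimal value of an LP is a convex, piecewise‑linear function of its right‑hand side on the polyhedron where it is finite, hence Lipschitz there. Applying this to $g$ and to each $V_m$ yields a finite constant $L$ — depending only on the market data ($\bm c_0$, the $\bm c_m$, the matrices $C_0,C_m,A_m,B_m$, and the bound vectors), \emph{not} on the discretization — with $|\Phi(\bm z)-\Phi(\bm z')|\le L\sum_m|z_m-z_m'|$ for $\Phi(\bm z):=g(\bm z)+\sum_m V_m(z_m)$. Let $\bm z^\circ$ attain $J^{\mathrm{tot}}$. For each $m$, because $\mc F_m$ is an interval containing $z_m^\circ$, for $\bar\delta$ small relative to the lengths $|\mc F_m|$ the convexity of $\mc F_m$ guarantees a point of $\mc Z_m^{\mathrm{dsc}}$ lying in $\mc F_m$ within distance $\delta_m$ of $z_m^\circ$; pick such $\hat z_m\in\mc Z_m^{\mathrm{dsc},\star}$. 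Then $\hat{\bm z}$ is feasible for \eqref{eq:layer_tso_rsf} (each $\hat z_m$ a feasible discrete interface flow, and $\hat{\bm z}$ TSO‑feasible under the standing solvability assumption on Layer 2), whence
\[
J^{\mathrm{agg}}\ \le\ \Phi(\hat{\bm z})\ \le\ \Phi(\bm z^\circ)+L\textstyle\sum_{m\in\mc N^{\mathrm D}}|\hat z_m-z_m^\circ|\ \le\ J^{\mathrm{tot}}+L\textstyle\sum_{m\in\mc N^{\mathrm D}}\delta_m\ \le\ J^{\mathrm{tot}}+L\,|\mc N^{\mathrm D}|\,\bar\delta .
\]
Together with $J^{\mathrm{agg}}\ge J^{\mathrm{tot}}$ this gives $0\le J^{\mathrm{agg}}-J^{\mathrm{tot}}\le L\,|\mc N^{\mathrm D}|\,\bar\delta=\mc O(\bar\delta)$, as claimed.

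\textbf{Anticipated main obstacle.} The delicate part is the feasibility bookkeeping around the rounding: one must make sure the rounded $\hat{\bm z}$ lies simultaneously in every DSO's feasible interface interval (handled by convexity of $\mc F_m$ and taking $\bar\delta$ small, or by discretizing $\mc F_m$ itself) and in the TSO‑feasible region (handled by the standing assumption that the transmission system accommodates the forwarded interface flows). A secondary but essential point is to argue explicitly that the Lipschitz constant $L$ is independent of the discretization — otherwise the bound would be vacuous — which holds precisely because $g$ and the $V_m$ are fixed piecewise‑linear convex functions determined only by the market data.
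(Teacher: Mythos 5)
Your proof is correct in substance and reaches the same $\mc O(\bar\delta)$ bound, but the sensitivity step is argued differently from the paper. Your Step 1 coincides with the paper's Lemma \ref{le:rsf_solves_MILP}: both reduce Alg.~\ref{alg:bid_aggregation} to minimizing a separable value function $\Phi(\bm z)=g(\bm z)+\sum_m V_m(z_m)$ over the discrete grid, versus the continuous box for the common market. Where you diverge is in how the gap $J^{\mathrm{agg}}-J^{\mathrm{tot}}$ is controlled. The paper proves an intermediate Lemma \ref{le:distance_rsf_common_sols} asserting that the \emph{computed} $\hat{\bm z}$ lies within $\delta_m$ (coordinatewise) of some common-market optimizer $\bm z^\circ$, and then bounds the cost difference by writing the objective as an explicitly linear function of $\bm z$ via pseudo-inverses $A_m^\dagger$ of the power-balance matrices. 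You instead round the continuous optimizer $\bm z^\circ$ onto the grid and invoke the standard fact that a parametric LP value function is convex piecewise-linear, hence Lipschitz, in its right-hand side. Your route is arguably cleaner: the paper's substitution $\bm x = A^\dagger(\bm e - B\bm z)$ treats the optimal cost as exactly linear in $\bm z$, which is only valid locally (the value functions $g$ and $V_m$ are piecewise linear, with breakpoints where the active set changes), whereas your Lipschitz bound holds globally with a constant determined by the data alone. What your argument buys is rigor on the sensitivity side; what it does not fully resolve --- and what the paper also glosses over --- is the feasibility of the rounded point: you need a grid point $\hat{\bm z}$ within $\delta_m$ of $z_m^\circ$ in each coordinate that is simultaneously feasible for every DSO subproblem \emph{and} for the TSO subproblem ($g(\hat{\bm z})<\infty$). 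Per-coordinate feasibility is handled by the interval structure of each $\mc F_m$, but joint TSO feasibility of the product-rounded point is not automatic if $\bm z^\circ$ sits on the boundary of the transmission-feasible region; you correctly flag this, and it is the one place where both your proof and the paper's rely on an unstated regularity of the feasible set rather than a proof.
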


Based on Theorem \ref{th:suboptimality_rsf}, we can infer that the efficiency of the sequential market with the proposed bid aggregation method tends to that of the common market as $\delta_m$ approaches zero. 
{Theorem \ref{th:suboptimality_rsf} also reveals that the optimality of the practical sequential market with the bid aggregation method is not lower bounded by the the optimal value of the idealized sequential market model (Definition \ref{def:ideal_sequential}), implying that the total cost of the former in fact can be better than the latter, especially when the interface flow prices are not optimal. 
	\begin{remark}\label{rem:rsf_complexity}
		Algorithm 2 requires DSO $m$ to solve $N_{\mc Z, m}$ LPs where $N_{\mc Z, m}$ is inversely proportional to $\delta_m$, i.e. $N_{\mc Z, m}= (z_m^{\max}\hspace{-2pt}-\hspace{-2pt}z_m^{\min})/\delta_m$. Moreover, the TSO must solve a mixed-integer LP instead of an LP as in the other methods.  {These aspects negatively impact the computational load of this method.}
	\end{remark}
	
\begin{remark}
\label{rem:outer_loop_alg2}
We can improve the solution quality of the proposed bid aggregation method by introducing an outer loop to Algorithm \ref{alg:bid_aggregation}. Specifically, given the partial solution $\hat{ \bm z}$, we can  discretize the range $[\hat z_m-\delta_m, \hat z_m + \delta_m]$ to obtain a new discrete set $\hat{\mc Z}_m^{\mathrm{dsc}}$ and redo steps 2--4 of Algorithm \ref{alg:bid_aggregation}. Thus, we practically iteratively reduce $\delta_m$, which can be more tractable than simply performing Algorithm \ref{alg:bid_aggregation} with a very small $\delta_m$.
\end{remark}

{The efficiency and grid safety guarantees of this method supports the potential of its practical implementation. However, a regulatory challenge remains in terms of the role of the entity that would be responsible for performing this bid aggregation, especially when this role is to be assumed by the DSO (given the need for the knowledge of the grid model for the computation of the step-wise offer curve and RSF). Finally, this method is computationally demanding, both for the DSOs and the TSOs, as demonstrated by Remark \ref{rem:rsf_complexity}. The complexity of the mixed-integer problem in Step 4 of Algorithm \ref{alg:bid_aggregation} scales with the number of distribution networks (as the method is applied for each single transmission-distribution connection), which is related to the dimension of the integer decision variables, implying a potential limitation for the application to large networks. As such, market design rules related to flexibility products and procurement timings should take this complexity into account for the successful implementation of this method.

\section{Numerical Simulations}
\label{sec:num_sim}
 
\begin{figure}%[!htbp]
\centering
\includegraphics[width=0.9\columnwidth]{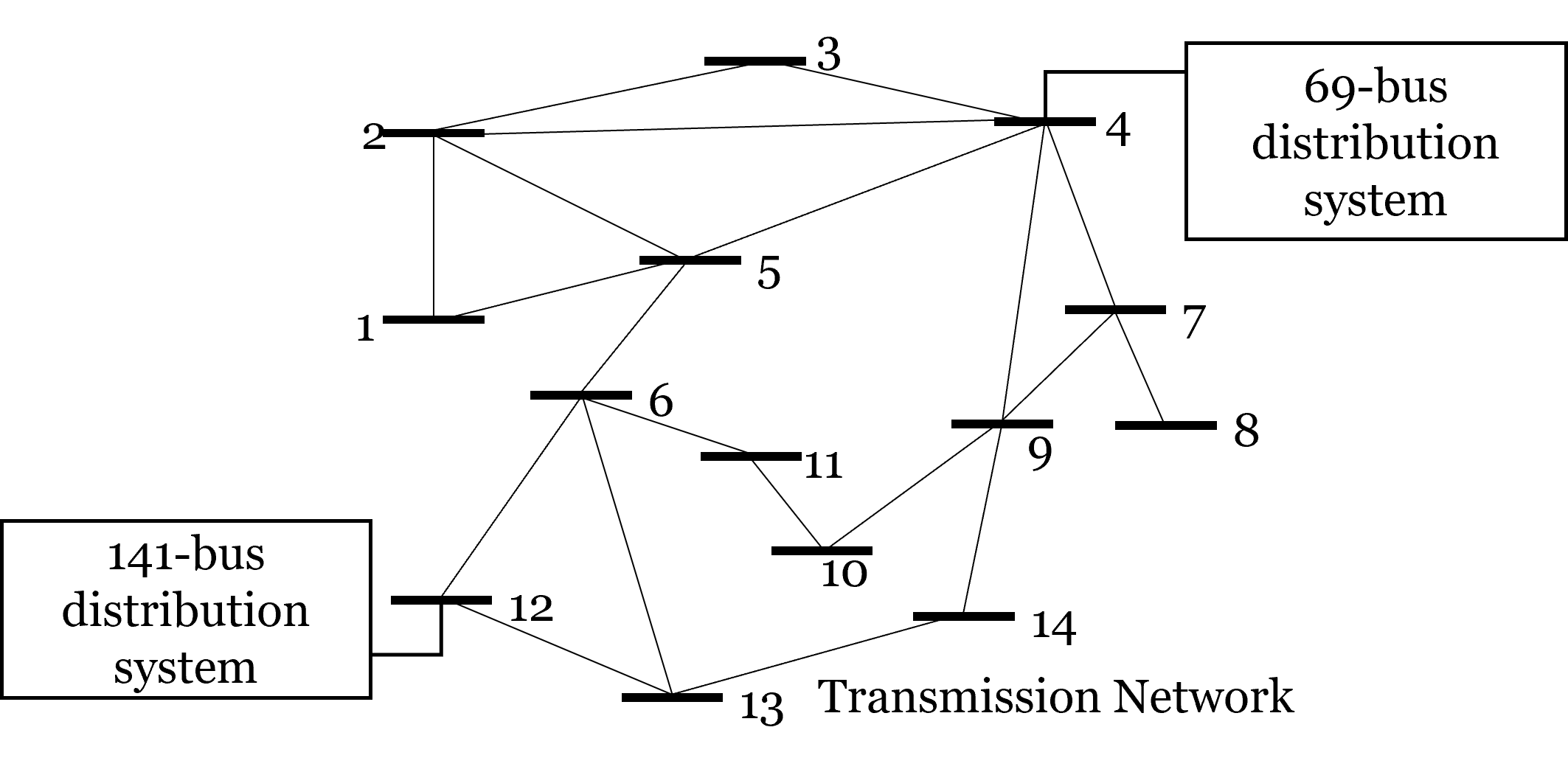}
\caption{ The network topology for the simulation study. }
\label{fig:network_structure}
\end{figure}   
In this section, we {present a case study to numerically compare} the performance of {proposed methods} in terms of their solution feasibility, efficiency, {and computational time.} 
To this end, we use the IEEE 14-bus transmission network interconnected with the Matpower 69-bus and 141-bus radial distribution networks, as shown in Fig. \ref{fig:network_structure}. 
We study four cases, whose datasets are available in \cite{ananduta2023dataset}.  In all cases, base injections and loads of the busses are adjusted to create {power imbalances, i.e., where the load and generation do not match,} while the line limits are also modified to create anticipated congestion in the networks. Moreover, upward and downward flexibility bids are randomly allocated to the busses. {The distinguishing features of each case are given as follows:} 

\begin{itemize}[leftmargin=*]
\item Case A: The {TSO} has an upward flexibility need {(power imbalances).} The prices of the downward bids are in the range [10, 25] €/MW while those of the upward bids are in the range [30, 55] €/MW, satisfying Assumption \ref{as:bid_prices}. Additionally, the distribution-level bids are more expensive than the transmission-level ones. 
\item Case B: The networks have imbalances as in Case A. However, the transmission-level upward bids, in the range $[90, 165]$ €/MW, are more expensive than the distribution-level ones. The prices of the other bids are as in Case A.
\item Case C: The networks have imbalances, as in Case A. The transmission-level upward bids are more expensive than the distribution-level ones as in Case B. We add new upward distribution-level bids whose prices are more expensive than those in the transmission network and new downward distribution-level bids whose prices are as in Case B. %These extra bids are selectively located as discussed later in the simulation results.
\item Case D: The {TSO} has a downward need. The price rules are set the same as Case A. 
\end{itemize}

For the three-layer and bid filtering method, we apply the interface flow price rules in \cite[Sect. III]{marques2023grid}, namely:  1) no pricing, i.e., when they are not priced ($c_m^z = 0, \forall m \in \mc N^{\mathrm D}$); 2) optimal, i.e., when they are priced by the optimal dual variables of the power balance constraints  {\eqref{eq:power_balance_tso_detail1}} of the common market problem (see Proposition \ref{prop:optimal_price}); {and 3) midpoint, i.e., when the price is the average of the most expensive downward bid and the least expensive upward bid.} In our simulation, we simply obtain the optimal prices by solving Problem \eqref{eq:common_market}, but this is not realistic in practice, {as explained in Section \ref{sec:grid_safety_considerations}.} %\footnote{{The paper \cite{marques2023grid} discusses the privacy issue in obtaining an optimal price and proposes an iterative distributed mechanism to deal with it.}}  
Note that in the bid aggregation method, the cost of using the interface flows is determined by the RSFs  {in which 10 different step-size values of $\bar \delta$ are used.} 

%\paragraph*{discuss results}
As previously mentioned, we use the optimal cost of the common market problem in \eqref{eq:common_market}, denoted by $J^{\mathrm{com}}=J^{\mathrm{tot}}(\{\bm x_m^\circ\}_{m \in \mc N})$, as the benchmark to determine the inefficiency of the sequential market when paired with each of the proposed methods, defined as:
\begin{equation*} 
\eta = \frac{J^{\mathrm{tot}}-J^{\mathrm{com}}}{|J^{\mathrm{com}}|} \times 100\%.
\end{equation*}

We summarize our case study results in Table \ref{tab:sim_res_eff}.  
The bid filtering and bid aggregation methods obtain grid-safe cleared bids in {all} cases, as proven in Propositions \ref{prop:grid_safe_alg1} and \ref{prop:grid_safe_alg2}. On the other hand, the three-layer corrective method produces unsafe cleared bids in Case B, due to {the} procurement of distribution-level bids in Layer 2 that causes unresolvable congestion. When the system is liquid enough, as in Case C, where we add upward and downward bids in critical nodes (whose lines are congested in the {results} of the three-layer market in Case B), we observe that this scheme obtain grid-safe bids.

In terms of efficiency, we observe that the performance of these methods are case-dependent. In Cases A and D, the three-layer and bid filtering methods {with the same interface pricing} obtain equal solutions, {as} distribution-level bids are more expensive than the transmission-level ones, and thus the {distribution-level bids} are not cleared in the second layer while their first-layer solutions are equal. Meanwhile, in Case C, we can observe that the bid filtering method {under either the no-pricing or midpoint rule} obtains the most inefficient solutions among the three methods, although the three-layer market scheme incurs an additional cost from resolving congestion in the third layer. From Case C, we also observe the benefit of forwarding bids as the fragmented market solution is the least efficient under the same pricing rule,  {illustrating Proposition \ref{prop:grid_safe_alg1}.\ref{prop:subopt_alg1_2}.}   On the other hand, the bid aggregation method {generally} achieves low inefficiency, {while} its inefficiency has a decreasing trend as the step size decreases, as proven in Theorem \ref{th:suboptimality_rsf} and evidently shown in the top plot of Fig. \ref{fig:eff_rsf}.

These numerical results also provide an illustration of Proposition \ref{prop:optimal_price}. In Cases A and D, for the three-layer method, when the interface flows are priced optimally (the third row), the second layer market does not clear any distribution-layer bids. In turn, the outcome of the sequential market is as efficient as the common market. However, in Cases B and C, the second-layer market clears some distribution-level bids, thus even under an optimal interface flow price, the outcomes are then not as optimal as the common market. In fact, in Case B, the three layer market obtains an infeasible solution under an optimal interface flow price, due to the low liquidity at the distribution systems. Similarly, for the bid filtering method under the optimal pricing rule, the outcome is as efficient as the common market solution in Cases A and D, as no distribution-level bids are cleared in Layer 2, while in Cases B and C, some distribution-level bids are cleared, thus they cannot be a solution to the common market problem.

The average computational time over all the simulated cases of the three-layer and bid filtering methods are $6.10$ and $126.57$ seconds, respectively. 
The bottleneck of the bid filtering method is indeed in the filtering process, especially when the worst-case scenario, as stated in Remark \ref{rem:computation_comp_alg1}, occurs. On the other hand, the computational time of the bid aggregation method on the simulated cases varies in the range of $[8.75, 135.75]$ seconds and i} depends on the step size of the RSF, $\bar \delta$, which determines the number of LPs that must be solved and the dimension of the set of binary variables required in the TSO problem. As we observe in the bottom plot of Fig. \ref{fig:eff_rsf}, the computational time requirement of the bid aggregation method grows exponentially as $\bar \delta$ decreases. From Fig. \ref{fig:eff_rsf}, we can indeed infer that there is a trade-off between solution quality and computational time.  The computational time results indicate the applicability potential of these methods even for near real-time markets. 
In addition, we note that the computational time of each method depends on the size of the distribution system as the number of constraints of the LPs that must be solved in any of the methods is linearly proportional to the network size.  Nevertheless, any LP can be solved in polynomial time and efficiently using existing off-the-shelf solvers.

\begin{table}
\caption{Inefficiency comparison}
\label{tab:sim_res_eff}
\begin{tabular}{|l|cccc|}
	\hline
	
	\multirow{2}{*}{Method} & \multicolumn{4}{c|}{Inefficiency $\eta$ $[\%]$} \\
	
	& Case A & Case B & Case C & Case D  \\
	\hline
	Three-layer$^1$	& 4.35  & {infeasible}  & \cellcolor{yellow}\textcolor{blue}{22.50} & 237.67 \\
	Three-layer$^2$	& 0.0 & {infeasible} & \cellcolor{yellow}\textcolor{blue}{7.17} & 0.0  \\
	Three-layer$^3$	& 0.0  & {infeasible}  & \cellcolor{yellow}\textcolor{blue}{10.36} & 0.0 \\
	%		Bid filtering$^1$	& 2.25 & 67.28 & 67.28 & 95.36 \\
	Bid filtering$^1$	& 4.35  & 31.67  & 31.67 & 31.67  \\
	Bid filtering$^2$	& 0.0 & 0.0 & 0.0 & 0.0 \\
	Bid filtering$^3$	& 0.0  & 17.03  & 17.03 & 0.0 \\
	Bid-aggregation$^4$	& \hspace{-5pt} [0.04, 1.8] &\hspace{-5pt} [0.03, 5.2] &\hspace{-5pt} [0.09, 3.6] & \hspace{-5pt} [1.7, 77.3]  \\
	Fragmented$^3$&  0.0 & 59.17 & 59.17 & 0.0  \\
	\hline 
\end{tabular}

$1$: no pricing; $2$: optimal; $3$: midpoint; $4$: $\bar \delta \in [0.03, 1.3]$.\\
\textcolor{blue}{\hl{blue}}: cleared bids cause congestions but resolvable using the third layer.
\end{table}

\begin{figure}[!tbp]
\centering
\includegraphics[width=0.85\columnwidth]{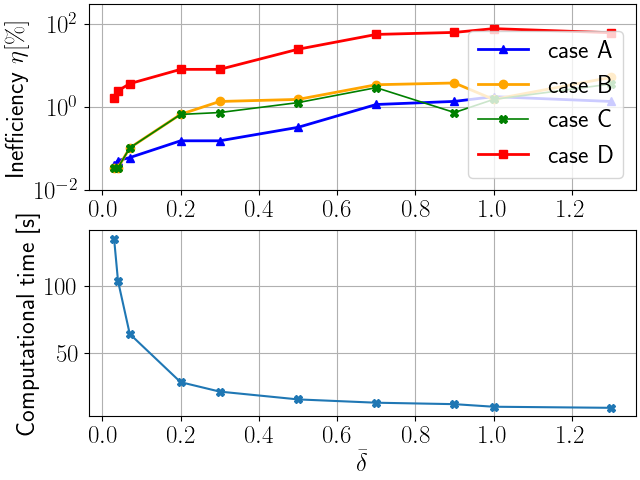}
\caption{The inefficiency of the bid aggregation method {(top) and the average computational time (bottom)} with varying RSF step size $\bar \delta$. }
\label{fig:eff_rsf}
\end{figure} 

Finally, Fig. \ref{fig:price-basedRSF} {compares} the inefficiency {variation} of the bid aggregation method that is based on the primal cost and the dual price RSFs \cite{papavasiliou2020hierarchical,mezghani2021coordination}. {We can observe that the proposed RSF method {outperforms} the dual-price RSF one as it always {achieves a} lower inefficiency for any step size $\bar \delta$.} Furthermore, for the dual-price RSF method, we cannot clearly observe {a decreasing inefficiency trend with a decrease in $\bar{\delta}$.} 

\begin{figure}[!tbp]
\centering
\includegraphics[width=0.85\columnwidth]{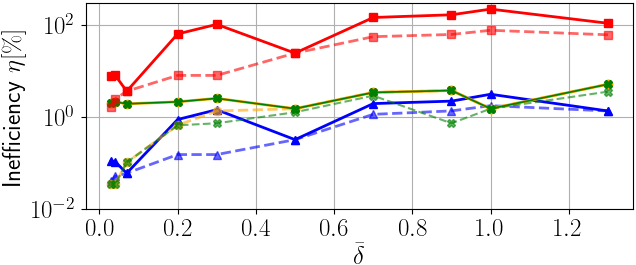}
\caption{The inefficiency of the bid aggregation method with dual-price-based RSFs  \cite{papavasiliou2020hierarchical,mezghani2021coordination} (solid lines) compared with the {proposed} primal-{cost-based} RSFs (dashed lines). {Each line represents a case with the markers and colors following the top plot of Fig. \ref{fig:eff_rsf}.} }
\label{fig:price-basedRSF}
\end{figure}

\section{Conclusion}
\label{sec:conclusion}

\begin{table}[t]
\hspace{-1pt}	{ 
	\caption{Comparison of properties and performances}
	\label{tab:comparison}
	\centering
	\begin{tabular}{|l|c|c|c|}
		\hline
		Metric	& \hspace{-1pt}Three-layer\hspace{-1pt} & Bid filtering & \hspace{-1pt}Bid aggregation\hspace{-1pt}  \\
		\hline 
		Grid-safety	& no & yes, under  & yes\\ 
		guarantee & & Assumptions \ref{as:bid_prices}--\ref{as:radial_dn} & \\
		\hline 
		Inefficiency & highest & middle & lowest \\
		&   & {(bounded)} & (controllable)  \\
		\hline
		Computation load & lowest & middle & highest \\ \hline 
		%	load &  &  &   \\ 	\hline
	\end{tabular}
}
\end{table}

In sequential TSO-DSO flexibility markets, when the TSO-layer market does not have sufficient information on the distribution networks, forwarding bids from the DSO-layer markets to the TSO-layer one can result in congestion in distribution systems if not handled carefully. Three methods, namely a three-layer-market scheme, bid filtering, and bid aggregation, can be used to achieve a grid-safe use of distributed flexibility by the TSO.  The theoretical properties and numerical performances of these methods obtained in this work are summarized in Table \ref{tab:comparison}. Although bid aggregation provides the most desired outcome as it can provide a grid-safe guarantee under the most relaxed assumptions and a high efficiency, it can be the most computationally demanding method, hindering its practical implementation potential. While the three-layer-market scheme requires the market to be sufficiently liquid, it can be more efficient than bid filtering; however, this result can be case-dependent. On the other hand, bid filtering is guaranteed to produce grid-safe cleared bids in radial networks and mild assumptions on the bid prices. Analyzing the performance of these methods in handling other grid-unsafe events, such as voltage violations, and under different power flow models, is an important future research avenue. 

\appendix
\label{appendix}
%\section{Proofs}
{ 
\subsection{Proof of Proposition \ref{prop:sequential_frag}}
\label{pf:prop:sequential_frag}
The problems solved in Layer 1 of the fragmented market model and the idealized sequential market model are equal. Thus, they have an equal optimal cost value. However,
in the fragmented market model, no distribution-layer bids are cleared in Layer 2 (see Definition \ref{def:fragmented}). Thus, $\bm u_m^{\mathrm f}$ and $\bm d_m^{\mathrm f}$, for each $m \in \mc N^{\mathrm D}$ are obtained by solving Problem \eqref{eq:layer_dso} only. Furthermore, as a consequence of the equality constraint \eqref{eq:aggregated_balance_dn_tso}, the interface flow solution from Layer 2 is the same as that of Layer 1, i.e., $z_m^{\mathrm f} = z_m^{\star\star}=z_m^\star$, {capturing that} the interface flow values are fixed based on the market clearing in Layer 1. Therefore, we can conclude that $\bm u_m^{\mathrm f}$ and $\bm d_m^{\mathrm f}$  satisfy \eqref{eq:power_balance_dso_detail1}, \eqref{eq:power_balance_dso_detail2}, \eqref{eq:limit_flow_dso}, and \eqref{eq:limit_z_dso}. Consequently, $(\bm x_0^{\mathrm f}, (\bm x_m^{\mathrm f}, z_m^f)_{m \in \mc N^{\mathrm D}})$ is a feasible point to the market clearing problem in Layer 2 of the idealized sequential market model, which includes \eqref{eq:power_balance_dso_detail1}, \eqref{eq:power_balance_dso_detail2}, and \eqref{eq:limit_flow_dso} (see Definition \ref{def:ideal_sequential}). However, it is not necessarily an optimal one, and hence, the inequality \eqref{eq:ineq_sequential_frag} {readily applies.} \qedd
}

\subsection{Proof of Proposition \ref{prop:optimal_price}}
\label{pf:prop:optimal_price}

{	 {If the interface flows are priced optimally,} the solution to Problem \eqref{eq:common_market} is equal to that of the fragmented market clearing problem  {(Definition \ref{def:fragmented}) \cite[Prop. 2]{marques2023grid}.} Therefore, if $(\bm u_m^{\star}\hspace{-1pt}+\hspace{-1pt}\bm u_m^{\star\star}, \bm d_m^{\star}\hspace{-1pt}+\hspace{-1pt}\bm d_m^{\star\star}), \forall m \in \mc N^{\mathrm D},$ and $(\bm u_0^{\star\star},\bm d_0^{\star\star})$ are a solution to Problem \eqref{eq:common_market}, then by construction of the fragmented market model,  {where no distribution-level bids are cleared in Layer 2,} it must hold that  $\bm u_m^{\star\star}\hspace{-2pt}=\hspace{-2pt}0$ and $\bm d_m^{\star\star}\hspace{-2pt}=\hspace{-2pt}0, \forall m \in \mc N^{\mathrm D}$. 
} 
\qedd 

\subsection{Proof of Proposition \ref{prop:grid_safe_alg1}}
\label{pf:prop:grid_safe_alg1}
%\begin{proof}
{We first prove Proposition \ref{prop:grid_safe_alg1}.\ref{prop:grid_safe_alg1_point}.} The cleared transmission-layer bids, $(\bm u_0^{\star\star},\bm d_0^{\star\star})$, satisfy  {\eqref{eq:power_balance_tso_detail1}, \eqref{eq:power_balance_tso_detail2},} and \eqref{eq:limit_flow_tso} by construction, as they are obtained by solving Problem \eqref{eq:layer_tso}. Now we show that the cleared distribution-level bids $(\bm u_m^{\star}+\bm u_m^{\star\star}, \bm d_m^{\star}+\bm d_m^{\star\star}), \forall m \in \mc N^{\mathrm D},$ respect the distribution network constraints even though $(\bm u_m^{\star\star}, \bm d_m^{\star\star}), \forall m \in \mc N^{\mathrm D},$ are obtained from Layer 2, which does not include such constraints. 

To that end, we need the following lemma.
\begin{lemma}
	\label{le:either_upward_downward_cleared}
	Let Assumption \ref{as:bid_prices} hold. Let $(\bm u_m^{\star\star}, \bm d_m^{\star\star})_{m \in \mc N^{\mathrm D}}$ be the cleared distribution-level bids obtained by solving Problem \eqref{eq:layer_tso} (Layer 2). Then, for each $m \in \mc N^{\mathrm D}$, only one of the following conditions holds:
	\begin{enumerate}
		\item No downward bids are cleared, i.e., $\bm d_m^{\star\star} = 0$.
		\item No upward bids are cleared, i.e., $\bm u_m^{\star\star} = 0$. 
		\item Both upward and downward bids are not cleared, i.e., $\bm u_m^{\star\star} = 0$ and $\bm d_m^{\star\star} = 0$.
	\end{enumerate}
\end{lemma}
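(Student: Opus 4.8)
The plan is to prove Lemma~\ref{le:either_upward_downward_cleared} by a cost-decreasing exchange argument, exploiting that the aggregated power-balance constraint~\eqref{eq:aggregated_balance_dn_tso} is the \emph{only} constraint of Problem~\eqref{eq:layer_tso} that links the distribution-level bid variables $(\bm u_m,\bm d_m)$, $m\in\mc N^{\mathrm D}$, to the remaining variables.

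First I would record the structural observation that, for a fixed $m\in\mc N^{\mathrm D}$, the vectors $\bm u_m$ and $\bm d_m$ enter Problem~\eqref{eq:layer_tso} only through (i)~the objective, via the term $\bm c_m^{u\top}\bm u_m-\bm c_m^{d\top}\bm d_m$; (ii)~the box constraint~\eqref{eq:limit_u_d_dn_tso}; and (iii)~the aggregated balance~\eqref{eq:aggregated_balance_dn_tso}, in which only the scalar quantity $\1^\top\bm u_m-\1^\top\bm d_m$ appears. In particular the transmission nodal balances~\eqref{eq:power_balance_tso_detail1}--\eqref{eq:power_balance_tso_detail2} and the line limits~\eqref{eq:limit_flow_tso} involve the interface flows $z_{\phi(k)}$ and the transmission-level bids but not $(\bm u_m,\bm d_m)$ directly. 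I would also note that Assumption~\ref{as:bid_prices}, together with $\bm c_m^d\in\bb R_+^{|\mc D_m|}$, forces $c_{m,n}^u>c_{m,n'}^d\ge 0$ for every $n\in\mc U_m$ and $n'\in\mc D_m$.

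Next I would argue by contradiction. Suppose that at the (optimal) solution $(\bm x_0^{\star\star},(\bm u_m^{\star\star},\bm d_m^{\star\star},z_m^{\star\star})_{m\in\mc N^{\mathrm D}})$ there is some $m$ with $u_{m,n}^{\star\star}>0$ for some $n\in\mc U_m$ and $d_{m,n'}^{\star\star}>0$ for some $n'\in\mc D_m$. Set $\varepsilon:=\min\{u_{m,n}^{\star\star},d_{m,n'}^{\star\star}\}>0$ and form the perturbed point that is identical to the solution except that $u_{m,n}$ is decreased to $u_{m,n}^{\star\star}-\varepsilon$ and $d_{m,n'}$ is decreased to $d_{m,n'}^{\star\star}-\varepsilon$. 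This perturbation leaves $\1^\top\bm u_m-\1^\top\bm d_m$ unchanged, so~\eqref{eq:aggregated_balance_dn_tso} still holds with the same $z_m^{\star\star}$; it keeps the two modified entries nonnegative and below their upper bounds, so~\eqref{eq:limit_u_d_dn_tso} still holds; and it affects no other constraint, by the structural observation above. Hence the perturbed point is feasible, yet its objective differs from the optimal one by $(c_{m,n'}^d-c_{m,n}^u)\varepsilon<0$ --- a contradiction. Therefore, for each $m$, at most one of $\bm u_m^{\star\star}$, $\bm d_m^{\star\star}$ can be nonzero, which is precisely the stated trichotomy.

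The argument is short; the only point requiring care is the verification that the balance-preserving swap disturbs none of the transmission-side constraints, and this is immediate once~\eqref{eq:aggregated_balance_dn_tso} is identified as the unique coupling constraint. A secondary subtlety worth a line is that the conclusion holds for \emph{every} optimal solution of Problem~\eqref{eq:layer_tso} (not merely for a particular one), since the exchange produces a strict cost decrease whenever both an upward and a downward distribution-level bid of the same DSO are cleared.
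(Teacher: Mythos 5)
Your proof is correct and takes essentially the same approach as the paper: a cost-decreasing exchange exploiting that $(\bm u_m,\bm d_m)$ enter Layer 2 only through the net position $\1^\top\bm u_m-\1^\top\bm d_m$ in \eqref{eq:aggregated_balance_dn_tso}, together with the price gap of Assumption \ref{as:bid_prices}. The only difference is cosmetic: the paper partitions on the sign of the net position and replaces the whole bid vector with a one-sided one, whereas your local $\varepsilon$-perturbation of a single cleared pair makes the feasibility check of the competing point immediate.
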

\begin{IEEEproof}[Proof of Lemma \ref{le:either_upward_downward_cleared}]
	For each $m \in \mc N^{\mathrm D}$,we denote the net flexibility position by $\omega_m = \1^\top \bm u_m^{\star\star} - \1^\top \bm d_m^{\star\star}$. By Assumption \ref{as:bid_prices}, it can be shown {by contradiction} that condition 1 holds for any $\omega_m > 0$, condition 2 holds for any $\omega_m < 0$, and condition 3 holds for $\omega_m = 0$, implying that all possible values of $\omega_m$ {are covered. We}  show one of the cases, i.e., $\omega_m > 0$, as the {proofs of the} other {conditions and the corresponding cases} follow the same lines of reasoning. 
	
	For the sake of contradicting {condition 1,} suppose that, {when  $\omega_m > 0$,}  some downward bids are cleared, i.e., $\1^\top \bm d_m^{\star\star} > 0$. Then, $\1^\top \bm u_m^{\star\star} = \omega_m + \1^\top \bm d_m^{\star\star} > \omega_m$. Let us now consider another set of bids $(\tilde{\bm u}_m, \tilde{\bm d}_m)$, where $\1^\top \tilde{\bm u}_m = \omega_m$ and $\tilde{\bm d}_m=0$. This bid set is a feasible solution to Problem \eqref{eq:layer_tso} since it satisfies all the constraints. Furthermore, the cost difference between $(\bm u_m^{\star\star}\hspace{-2pt},\hspace{-2pt} \bm d_m^{\star\star})$ and $(\tilde{\bm u}_m\hspace{-2pt},\hspace{-2pt} \tilde{\bm d}_m)$ can be written as:
	{%\small	
		\begin{align*} 
			&\bm c_m^{u\top} (\bm u_m^{\star\star}-\tilde{\bm u}_m) -\bm c_m^{d\top} (\bm d_m^{\star\star}-\tilde{\bm d}_m)	\\
			&	\stackrel{(a)}{\geq} (\min_{m \in \mc U_m} c_{m,n}^u)\1^{\top} (\bm u_m^{\star\star}-\tilde{\bm u}_m) - (\max_{m \in \mc D_m} c_{m,n}^d)\1^{\top} \bm d_m^{\star\star} \\
			& \stackrel{(b)}{=} (\min_{m \in \mc U_m} c_{m,n}^u - \max_{m \in \mc D_m} c_{m,n}^d)\1^{\top} \bm d_m^{\star\star} %\\
			%& 
			\stackrel{(c)}{>} 0,
	\end{align*}}%
	where $(a)$ holds because {$\tilde{\bm d}_m=0$,} $\1^\top \bm u_m^{\star\star} > \omega_m =\1^\top \tilde{\bm u}_m$, and $\1^\top \bm d_m^{\star\star} > 0$; $(b)$ holds because $\omega_m \textbf{}=\textbf{} \1^\top \bm u_m^{\star\star} \hspace{-2pt}-\hspace{-2pt} \1^\top \bm d_m^{\star\star}$ and $\omega_m \hspace{-2pt}=\hspace{-2pt}\1^\top \tilde{\bm u}_m$; {and} $(c)$ holds due to Assumption \ref{as:bid_prices}. Hence, $(\tilde{\bm u}_m, \tilde{\bm d}_m)$ is cheaper, { implying that $(\bm u_m^{\star\star}, \bm d_m^{\star\star})$, is not an optimal solution and should not have been cleared,}  (a contradiction).  
	\end{IEEEproof}	
	
	\medskip 
	Now, we proceed with the proof of Proposition \ref{prop:grid_safe_alg1}.\ref{prop:grid_safe_alg1_point}. As a result of Algorithm \ref{alg:bid_filtering}, if, in Layer 2, all the forwarded upward bids in $\mc U_m^{\mathrm f}$ are fully cleared while all the downward bids in $\mc D_m^{\mathrm f}$ are rejected, then the cleared bids do not violate the grid constraints  {\eqref{eq:power_balance_dso_detail1}, \eqref{eq:power_balance_dso_detail2},} \eqref{eq:limit_flow_dso}, and \eqref{eq:limit_z_dso}. The same implication holds when all the forwarded downward bids in $\mc D_m^{\mathrm f}$ are fully cleared while all the forwarded {upward} bids in $\mc U_m^{\mathrm f}$ are rejected. Furthermore, by Assumption \ref{as:radial_dn}, $C_m$ consists of non-positive elements only  {(as can be defined through the sign convention of the PTDF matrix of a radial system).} Due to this fact and the linear relationship between $\bm u_m$, $\bm d_m$, and $\bm p_m$ in  {\eqref{eq:power_balance_dso_detail1}--\eqref{eq:power_balance_dso_detail2},} the extreme values of $C_m p_m$ in \eqref{eq:limit_flow_dso} occur when either  $\bm u_m = \bm u_m^{\max}$ and $\bm d_m = 0$ or $\bm u_m = 0$ and $\bm d_m = \bm d_m^{\max}$. Therefore, for any $\bm u_m^{\star\star} \leq \bm u_m^{\max} - \bm u_m^\star$ while $\bm d_m = 0$ or for any $\bm d_m^{\star\star} < \bm d_m^{\max} - \bm d_m^\star$ while $\bm u_m = 0$, the cleared distribution-level bids, $(\bm u_m^{\star}+\bm u_m^{\star\star}, \bm d_m^{\star}+\bm d_m^{\star\star}), \forall m \in \mc N^{\mathrm D},$ are feasible, i.e., they satisfy  {\eqref{eq:power_balance_dso_detail1}, \eqref{eq:power_balance_dso_detail2},} \eqref{eq:limit_flow_dso},  and \eqref{eq:limit_z_dso}. Even though both $\mc U_m^{\mathrm f}$ and $\mc D_m^{\mathrm f}$ are forwarded to Layer 2, 
	Lemma \ref{le:either_upward_downward_cleared} ensures that they cannot be cleared simultaneously.  
	
	Proposition  \ref{prop:grid_safe_alg1}.\ref{prop:subopt_alg1_1} holds by the fact that if all bids are forwarded, then the set of bids considered in Problem \eqref{eq:layer_tso} and that in Layer 2 of the idealized sequential market coincide. Since the cleared bids are grid-safe, then $(u_m^{\star\star}, d_m^{\star\star})$, for all $m \in \mc N$, must also be a solution to Layer 2 of the idealized sequential market. 
Proposition \ref{prop:grid_safe_alg1}.\ref{prop:subopt_alg1_2} holds by the construction of the constraints in \eqref{eq:limit_u_d_dn_tso_2}, given that $\mc U_m^{\mathrm f} = \emptyset$ and $\mc D_m^{\mathrm f} = \emptyset$, for all $m \in \mc N^{\mathrm D}$, which implies the equivalence  with the fragmented market model (Definition \ref{def:fragmented}).
\qedd %

\subsection{Proof of Proposition \ref{prop:grid_safe_alg2}}
\label{pf:prop:grid_safe_alg2}

In Problem \eqref{eq:layer_tso_rsf} of Algorithm \ref{alg:bid_aggregation}, the grid constraints of the transmission network  {\eqref{eq:power_balance_tso_detail1}, \eqref{eq:power_balance_tso_detail2}}, and \eqref{eq:limit_flow_tso} are included. Furthermore, $ \mc Z_m^{\mathrm{dsc},\star} \subset [z_m^{\min},z_m^{\max}]$. Finally, as the distribution-level bids that are cleared are obtained as a solution to Problem \eqref{eq:layer_dso} with a fixed interface flow value. Then, these bids are safe for their distribution network. 
\qedd

\subsection{Intermediate results in Section \ref{sec:bid_aggregation}}
\label{sec:intermediate_results}
To prove Proposition \ref{cor:lower_bound_rsf_optimality} and Theorem \ref{th:suboptimality_rsf}, we need the following intermediate results. 
\begin{lemma}
\label{le:rsf_solves_MILP}
Algorithm \ref{alg:bid_aggregation} solves 
\begin{equation}%\small 
	\begin{aligned}
		& 	\min_{\bm x_0, (\bm x_m, z_m)_{m \in \mc N^{\mathrm D}}} \ \  \bm c_0^\top \bm x_0 + \textstyle\sum_{m\in \mc N^D} \bm c_m^\top \bm x_m \\
		& \operatorname{s.t.} \  \text{ {\eqref{eq:power_balance_tso_detail1}, \eqref{eq:power_balance_tso_detail2},} \eqref{eq:limit_flow_tso}, \eqref{eq:limit_u_d_tso}, }\\
		& \qquad \ \text{ {\eqref{eq:power_balance_dso_detail1}, \eqref{eq:power_balance_dso_detail2},} \eqref{eq:limit_flow_dso}, \eqref{eq:limit_u_d_dso},  } z_m \in \mc Z_m^{\mathrm{dsc}},  \forall m \hspace{-2pt}\in\hspace{-2pt} \mc N^{\mathrm D}.
	\end{aligned}
	\label{eq:rsf_lemma}
\end{equation}
\end{lemma}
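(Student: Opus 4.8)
The plan is to recognize Problem \eqref{eq:rsf_lemma} as the common market problem \eqref{eq:common_market} augmented with the extra restriction $z_m \in \mc Z_m^{\mathrm{dsc}}$, and to solve it by a two-stage partial-minimization (nested optimization) argument that mirrors exactly the structure of Alg. \ref{alg:bid_aggregation}. First I would fix the vector of interface flows $\bm z = \col((z_m)_{m \in \mc N^{\mathrm D}})$ and observe that, for fixed $\bm z$, the objective and the constraint set of \eqref{eq:rsf_lemma} decouple across blocks: the variables $\bm x_0$ occur only in the transmission constraints \eqref{eq:power_balance_tso_detail1}, \eqref{eq:power_balance_tso_detail2}, \eqref{eq:limit_flow_tso}, \eqref{eq:limit_u_d_tso} (with $\bm z$ entering only as a parameter through \eqref{eq:power_balance_tso_detail1}) and contribute $\bm c_0^\top \bm x_0$ to the cost, while for each $m \in \mc N^{\mathrm D}$ the variables $\bm x_m$ occur only in \eqref{eq:power_balance_dso_detail1}, \eqref{eq:power_balance_dso_detail2}, \eqref{eq:limit_flow_dso}, \eqref{eq:limit_u_d_dso} (with $z_m$ a parameter) and contribute $\bm c_m^\top \bm x_m$. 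Hence minimizing \eqref{eq:rsf_lemma} over $\bm x_m$ for a fixed $z_m = \hat z_m \in \mc Z_m^{\mathrm{dsc}}$ is precisely the problem solved in Step 2 of Alg. \ref{alg:bid_aggregation}, namely Problem \eqref{eq:layer_dso} with $c_m^z = 0$ and \eqref{eq:limit_z_dso} replaced by $z_m = \hat z_m$ (the membership $\mc Z_m^{\mathrm{dsc}} \subset [z_m^{\min},z_m^{\max}]$ making this substitution lossless).

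Next I would substitute these partial minima back in. By Steps 2--3, the inner distribution problem is feasible exactly when $z_m \in \mc Z_m^{\mathrm{dsc},\star}$, in which case its optimal value equals $f_m^{\mathrm r}(z_m)$; under the convention that an infeasible inner minimization has value $+\infty$, the outer minimization is thereby effectively restricted to $z_m \in \mc Z_m^{\mathrm{dsc},\star}$ for every $m$. Consequently Problem \eqref{eq:rsf_lemma} reduces to minimizing $\bm c_0^\top \bm x_0 + \textstyle\sum_{m \in \mc N^{\mathrm D}} f_m^{\mathrm r}(z_m)$ over $\bm x_0$ and $(z_m)_{m \in \mc N^{\mathrm D}}$ subject to \eqref{eq:power_balance_tso_detail1}, \eqref{eq:power_balance_tso_detail2}, \eqref{eq:limit_flow_tso}, \eqref{eq:limit_u_d_tso} and $z_m \in \mc Z_m^{\mathrm{dsc},\star}$ --- which is exactly Problem \eqref{eq:layer_tso_rsf} cleared by the TSO in Step 4. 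Finally, Step 5 assigns each $\bm x_m$ the Step-2 minimizer attaining $f_m^{\mathrm r}(z_m^{\star\star})$, so the tuple returned by the algorithm is feasible for \eqref{eq:rsf_lemma} and attains its optimal value; hence it is an optimal solution, which is the assertion of the lemma.

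The only delicate point --- and the one I would write out carefully --- is the bookkeeping around infeasible discrete interface values: the outer variable in \eqref{eq:rsf_lemma} ranges over $z_m \in \mc Z_m^{\mathrm{dsc}}$, whereas Step 4 works with $z_m \in \mc Z_m^{\mathrm{dsc},\star}$, and one must justify that this is without loss. This holds because any $z_m \in \mc Z_m^{\mathrm{dsc}} \setminus \mc Z_m^{\mathrm{dsc},\star}$ renders the distribution constraints \eqref{eq:power_balance_dso_detail1}, \eqref{eq:power_balance_dso_detail2}, \eqref{eq:limit_flow_dso} infeasible, so such a value cannot appear in any feasible point of \eqref{eq:rsf_lemma}; the $+\infty$ convention then makes the reduction exact rather than merely an inclusion of feasible sets. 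Everything else is the standard fact that partial minimization of a jointly-defined linear program over one block of variables yields an equivalent reduced program in the remaining variables, together with a direct, line-by-line comparison of the constraint lists of \eqref{eq:rsf_lemma}, \eqref{eq:layer_dso}, and \eqref{eq:layer_tso_rsf}.
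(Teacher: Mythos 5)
Your proposal is correct and takes essentially the same route as the paper's proof: both fix the interface flows, exploit the block decoupling of \eqref{eq:rsf_lemma} into the TSO subproblem and the per-DSO subproblems solved in Step 2 of Alg. \ref{alg:bid_aggregation}, and identify the outer minimization over the discrete interface values with Problem \eqref{eq:layer_tso_rsf} in Step 4. Your explicit $+\infty$ convention for infeasible inner problems is a slightly cleaner way of handling the passage from $\mc Z_m^{\mathrm{dsc}}$ to $\mc Z_m^{\mathrm{dsc},\star}$, which the paper treats by simply noting that \eqref{eq:rsf_lemma2} is infeasible for discarded values.
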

\begin{IEEEproof}
Let us consider the following optimization:
\begin{equation}
	\begin{aligned}
		&	\min_{\bm x_0, (\bm x_m, z_m)_{m \in \mc N^{\mathrm D}}} \ \   \bm c_0^\top \bm x_0 + \textstyle\sum_{m\in \mc N^D} \bm c_m^\top \bm x_m \\
		&	\operatorname{s.t.} \ \  \text{ {\eqref{eq:power_balance_tso_detail1}, \eqref{eq:power_balance_tso_detail2},} \eqref{eq:limit_flow_tso}, \eqref{eq:limit_u_d_tso}, }\\
		& \qquad \ \ \text{ {\eqref{eq:power_balance_dso_detail1}, \eqref{eq:power_balance_dso_detail2},} \eqref{eq:limit_flow_dso}, and \eqref{eq:limit_u_d_dso},  } \forall m \in \mc N^{\mathrm D}, \\
		& \qquad \ \ z_m = \hat z_{m}, \forall m \in \mc N^{\mathrm D},
	\end{aligned}
	\label{eq:rsf_lemma2}
\end{equation}
for each $\hat z_{m} \in \mc Z_m^{\mathrm{dsc},\star}$ and all $m \in \mc N^{\mathrm{D}}$. One can solve Problem \eqref{eq:rsf_lemma} by enumerating the solutions to Problem \eqref{eq:rsf_lemma2}, for all $\hat z_{m} \in \mc Z_m^{\mathrm{dsc},\star}$ and $m \in \mc N^{\mathrm{D}}$. Note that, by definition of $\mc Z_m^{\mathrm{dsc},\star}$, for any $\hat z_{m} \in \mc Z_m^{\mathrm{dsc}}\setminus\mc Z_m^{\mathrm{dsc},\star}$, Problem \eqref{eq:rsf_lemma2} is infeasible.  We observe that \eqref{eq:rsf_lemma2} is decomposable, i.e., it is equivalent to:

{%\small 
	\begin{align} 
		&	\begin{cases}
			\min_{\bm x_0, \bm z}  &  \bm c_0^\top \bm x_0 \\
			\operatorname{s.t.}  & \text{ {\eqref{eq:power_balance_tso_detail1}, \eqref{eq:power_balance_tso_detail2},} \eqref{eq:limit_flow_tso}, \eqref{eq:limit_u_d_tso}, } \\
			& z_m = \hat z_{m}, \forall m \in \mc N^{\mathrm D}
		\end{cases}\\
		&+ \sum_{m \in \mc N^{\mathrm{D}}} 	\begin{cases}
			\min_{\bm x_m, z_m}  &   \bm c_m^\top \bm x_m \\
			\operatorname{s.t.} 
			& \text{ {\eqref{eq:power_balance_dso_detail1}, \eqref{eq:power_balance_dso_detail2},} \eqref{eq:limit_flow_dso}, and \eqref{eq:limit_u_d_dso},  }  \\
			& z_m = \hat z_{m}.
		\end{cases} \label{eq:dso_problem_rsf}
\end{align}}
\indent Notice that Problem \eqref{eq:dso_problem_rsf} is solved in Step 2 of Algorithm \ref{alg:bid_aggregation}, for each $\hat z_{m,k} \in \mc Z_m^{\mathrm{dsc},\star}$, by each DSO $m \in \mc N^{\mathrm{D}}$. The residual function $f_m^{\mathrm r}$ is a step function whose graph is defined by the pairs $(\hat z_{m},J_{m})$, for all $\hat z_{m} \in \mc Z^{\mathrm{dsc},\star}$,  where $J_{m}$ is the optimal value of Problem \eqref{eq:dso_problem_rsf}, i.e. if $\hat{\bm x}_m$ denotes a solution to Problem \eqref{eq:dso_problem_rsf}, then  $J_m := \bm c_m^\top \hat{\bm x}_m$. Therefore, Problem \eqref{eq:rsf_lemma2} is equivalent to
\begin{equation}%\small 
	\begin{aligned}
		\min_{\bm x_0, (\bm x_m, z_m)_{{m \in \mc N^{\mathrm D}}}} \ \ &  \bm c_0^\top \bm x_0 + \textstyle\sum_{m\in \mc N^D} f_m^{\mathrm r}(\hat z_m) \\
		\operatorname{s.t.} \ \ & \text{ {\eqref{eq:power_balance_tso_detail1}, \eqref{eq:power_balance_tso_detail2},} \eqref{eq:limit_flow_tso}, \eqref{eq:limit_u_d_tso}.}
	\end{aligned}
	\label{eq:rsf_lemma3}
\end{equation}
\indent Consequently, solving Problem \eqref{eq:layer_tso_rsf} in Step 4 of Algorithm \ref{alg:bid_aggregation} is equivalent to enumerating the solutions to Problem \eqref{eq:rsf_lemma3}, for all feasible interface flow values, implying the equivalence of Problems \eqref{eq:layer_tso_rsf} and \eqref{eq:rsf_lemma}.
\end{IEEEproof}

\begin{lemma}
\label{le:distance_rsf_common_sols}
Let $(\hat{ \bm x}, \hat{ \bm z})$ be the solution computed by Algorithm \ref{alg:bid_aggregation}. Then, there exists   a solution to the common market problem \eqref{eq:common_market}, denoted by $(\bm x^\circ, \bm z^\circ)$ such that
\begin{equation}
	|  { \hat z_m} -  z_m^\circ | \leq \delta_m, \quad \forall m \in \mc N^{\mathrm D}, \label{eq:distance_rsf_common_sols}
\end{equation}
where $\delta_m$ is as in Definition \ref{def:step_rsf}.
\end{lemma}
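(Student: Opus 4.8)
The plan is to translate \eqref{eq:distance_rsf_common_sols} into a statement about a convex value function and then settle it by a convexity/discretization argument. By Lemma \ref{le:rsf_solves_MILP}, the pair $(\hat{\bm x},\hat{\bm z})$ returned by Alg. \ref{alg:bid_aggregation} is an optimal solution of \eqref{eq:rsf_lemma}, i.e.\ of the common market problem \eqref{eq:common_market} with the extra restriction $z_m\in\mc Z_m^{\mathrm{dsc}}$ for all $m\in\mc N^{\mathrm D}$. Introduce the \emph{interface-flow value function} $g(\bm z)$, defined as the optimal value of \eqref{eq:common_market} with the interface flows fixed at $\bm z$ (via $z_m=\hat z_m$), and $g(\bm z)=+\infty$ if the resulting problem is infeasible; as already used in the proof of Lemma \ref{le:rsf_solves_MILP}, for fixed $\bm z$ this problem decouples into the TSO problem and the per-DSO problems. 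Since \eqref{eq:common_market} is a linear program whose right-hand side is affine in $\bm z$, $g$ is proper, convex and piecewise linear, with polyhedral effective domain $\mc D$; $\min_{\bm z}g(\bm z)$ equals the optimal value of \eqref{eq:common_market}, and $\argmin g$ is exactly the set of interface-flow vectors $\bm z^\circ$ that extend to common market solutions $(\bm x^\circ,\bm z^\circ)$. With this notation, Lemma \ref{le:rsf_solves_MILP} states that $\hat{\bm z}$ minimizes $g$ over the product grid $\mc D\cap\prod_m\mc Z_m^{\mathrm{dsc}}$, and \eqref{eq:distance_rsf_common_sols} asserts that $\hat{\bm z}$ is within $\delta_m$, coordinate-wise, of some element of $\argmin g$.

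I would first establish the one-dimensional kernel: if $\varphi$ is convex on an interval and $\hat t$ minimizes $\varphi$ over a finite ordered subset that contains the endpoints and has consecutive spacing at most $\delta$, then some minimizer of $\varphi$ lies within $\delta$ of $\hat t$. This follows because the two grid neighbours $t^-\le\hat t\le t^+$ obey $\varphi(t^-)\ge\varphi(\hat t)\le\varphi(t^+)$ with $\hat t-t^-\le\delta$, $t^+-\hat t\le\delta$, and a convex function sandwiched this way has a minimizer in $[t^-,t^+]$ (a short case analysis, with the obvious simplification when $\hat t$ is an endpoint of the grid). I would then lift this to $\bm z\in\bb R^{|\mc N^{\mathrm D}|}$: choose $\bm z^\circ\in\argmin g$ minimizing the scaled distance $\max_m|z_m-\hat z_m|/\delta_m$ to $\hat{\bm z}$; convexity of $g$ together with $g(\bm z^\circ)=\min g$ makes $g$ non-increasing along $[\hat{\bm z},\bm z^\circ]$ and, by minimality of $\bm z^\circ$, strictly decreasing in a neighbourhood of $\bm z^\circ$, so the segment meets $\argmin g$ only at $\bm z^\circ$. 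Assuming for contradiction that $\max_m|\hat z_m-z_m^\circ|/\delta_m>1$, I would ``transport'' $\hat{\bm z}$ towards $\bm z^\circ$ using a per-coordinate displacement budget $\delta_m$ (each $\mc Z_m^{\mathrm{dsc}}$ has gaps at most $\delta_m$ by Definition \ref{def:step_rsf}) and derive a contradiction with the fact that $\hat{\bm z}$ minimizes $g$ over the \emph{whole} product grid.

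The main obstacle is precisely this last, genuinely multidimensional step. Because the TSO subproblem couples all the interface flows, $g$ is convex but not separable, so the one-dimensional kernel does not transfer coordinate by coordinate, and the naive argument of rounding a common market solution to the nearest grid point only bounds $g(\hat{\bm z})$ from above (the ingredient of Theorem \ref{th:suboptimality_rsf}) rather than locating a common market solution near $\hat{\bm z}$; one must exploit grid-optimality of $\hat{\bm z}$ over the full product set, not merely along each axis. A secondary technical point is feasibility bookkeeping, since the grid points invoked in the contradiction must lie in $\mc D$: this is automatic when the discretized sets $\mc Z_m^{\mathrm{dsc},\star}$ still span each DSO's feasible interface interval with gaps at most $\delta_m$, and in general one simply runs the argument over $\mc Z_m^{\mathrm{dsc},\star}$ from the start. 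Once \eqref{eq:distance_rsf_common_sols} holds, Lipschitz continuity of the piecewise-linear $g$ on $\mc D$ turns it into the $\mc O(\bar\delta)$ bound of Theorem \ref{th:suboptimality_rsf}, and letting $\bar\delta\to0$ yields the tightness asserted in Proposition \ref{cor:lower_bound_rsf_optimality}.
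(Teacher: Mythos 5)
Your framework is sound and, in places, more careful than the paper's own argument: you correctly identify that the object to study is the LP value function $g(\bm z)$ (convex and \emph{piecewise} linear, not linear), and your one-dimensional kernel is correct. The genuine gap is the one you name yourself: the lift from one dimension to the product grid is never carried out, and that is exactly where the mathematical content of the lemma lies. Grid-optimality of $\hat{\bm z}$ over $\prod_m \mc Z_m^{\mathrm{dsc},\star}$ for a convex but non-separable $g$ does not by itself place $\hat{\bm z}$ within one grid cell of $\argmin g$: a convex piecewise-linear function whose ``valley'' is oblique to the coordinate axes can have several tied grid minimizers, some of which sit more than $\delta_m$ away from the continuous arg-min in a given coordinate (consider $g(z_1,z_2)=M|2z_1-z_2-\tfrac12|+\epsilon|z_1-\tfrac12|$ on a unit grid, where $(0,-1)$ ties with $(0,0)$ as a grid minimizer yet is $1.5$ away from the unique minimizer $(\tfrac12,\tfrac12)$ in the second coordinate). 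So the contradiction you sketch (``transport $\hat{\bm z}$ towards $\bm z^\circ$ with a per-coordinate budget $\delta_m$'') needs an actual mechanism: you must exhibit a feasible grid point with strictly smaller $g$-value, and convexity along the segment $[\hat{\bm z},\bm z^\circ]$ alone does not produce one, because the nearby grid points need not lie on that segment. As written, the proposal is a correct reduction plus an acknowledged open step, not a proof.

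For comparison, the paper closes this step differently (and more bluntly): it eliminates $\bm x$ through the power-balance equalities \eqref{eq:power_balance_dso} and \eqref{eq:power_balance_tso} via pseudo-inverses and asserts that $J^{\mathrm{tot}}$ is thereby an affine function of $\bm z$, so that the discrete minimizer is simply a grid point nearest to a continuous minimizer and hence within one grid spacing; a linear objective makes the coordinates decouple and the multidimensional issue disappear. If you want to follow that route you should justify the reduction — the equality constraints do not determine $\bm x$ uniquely from $\bm z$, and it is precisely the inequality constraints \eqref{eq:limit_flow_dso} and \eqref{eq:limit_u_d_dso} that make the true value function piecewise linear rather than linear, which is the very point your write-up correctly flags. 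Either supply the missing multidimensional argument within your own (more honest) framework, or adopt and rigorously defend the paper's linearization; at present neither is done.
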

\begin{IEEEproof}
By Lemma \ref{le:rsf_solves_MILP}, Algorithm \ref{alg:bid_aggregation} solves Problem \eqref{eq:rsf_lemma}, which has the same cost function and constraints as the common market problem \eqref{eq:common_market} except that $\bm z$ is constrained by the discrete set in \eqref{eq:limit_z_discrete}, which is more restricted than \eqref{eq:limit_z_tso}. Furthermore, by the linear equality constraints in \eqref{eq:power_balance_dso}, for all $m \in \mc N^{\mathrm D}$ and  {in \eqref{eq:power_balance_tso_detail1}--\eqref{eq:power_balance_tso_detail2}, which can be compactly represented as 
	\begin{equation}
		A_0 \bm x_0 + B_0 \bm z = \bm e_0,  \label{eq:power_balance_tso}
	\end{equation}
	with appropriate matrices $A_0$, which has full rank, and $B_0$,}
the cost function \eqref{eq:common_market_cost} can be written as 
\begin{align*}
	J^{\mathrm{tot}}&=\bm c_0^\top \bm x_0 + \textstyle\sum_{m\in \mc N^D} \bm c_m^\top \bm x_m\\
	&	=-(c_0^\top A_m^{\dagger}B_0 + (\col((c_m^\top A_m^{\dagger}B_m)_{m \in \mc N^{\mathrm D}}))^\top)\bm z \\
	&\quad 	+ c_0^\top A_0^{\dagger}\bm e_0 + \textstyle\sum_{m \in \mc N^{\mathrm D}}c_m^\top A_m^{\dagger}\bm e_m,
\end{align*}
where $(\cdot)^{\dagger}$ denotes the pseudo-inverse operator. Thus, $J^{\mathrm{tot}}$ is linearly proportional to $\bm z$.  Since the optimal cost of Problem \eqref{eq:common_market} is a lower bound to the cost of Problem \eqref{eq:rsf_lemma}, $\hat z_m$, for each $m \in \mc N^{\mathrm D}$, is an element in $\mc Z_m^{\mathrm{dsc}}$ closest to a (partial) solution to Problem \eqref{eq:common_market}, $z_m^\circ$. Since in $\mc Z_m^{\mathrm{dsc}}$ the distance between two consecutive elements is at most $\delta_m$, \eqref{eq:distance_rsf_common_sols} must hold.  
\end{IEEEproof}
\subsection{Proof of Proposition \ref{cor:lower_bound_rsf_optimality}}
\label{pf:cor:lower_bound_rsf_optimality}
By Lemma \ref{le:rsf_solves_MILP}, Algorithm \ref{alg:bid_aggregation} solves Problem \eqref{eq:rsf_lemma} where $\mc Z^{\mathrm{dsc}} \subsetneq \mc Z$. Therefore, the common market problem in \eqref{eq:common_market} is a convex relaxation to Problem \eqref{eq:rsf_lemma}, implying that the optimal value of the former is a lower bound of the latter.  This bound is tight, i.e., the optimal costs of Problems \eqref{eq:common_market} and \eqref{eq:rsf_lemma} are equal when optimal interface flow values of Problem \eqref{eq:common_market} lie in the discrete sets $\mc Z_m^{\mathrm{dsc}}$, for all $m \in \mc N^{\mathrm D}$, i.e., $z_m^\circ \in \mc Z_m^{\mathrm{dsc}}$.  \qedd

\subsection{Proof of Theorem \ref{th:suboptimality_rsf}}
\label{pf:th:suboptimality_rsf}
Let $(\hat{ \bm x}, \hat{ \bm z})$ be the solution obtained by Algorithm \ref{alg:bid_aggregation} and $(\bm x^\circ, \bm z^\circ)$ be a solution to the common market problem \eqref{eq:common_market}. It holds that:
{%\small
\begin{align}
	0&\stackrel{(a)}{\leq} %J^{\mathrm{total}}(\hat{ \bm x}) - J^{\mathrm{total}}({ \bm x}^\circ) \notag \\
	%&= 
	\bm c_0^\top (\hat{\bm x}_0-\bm x_0^\circ) + \textstyle\sum_{m\in \mc N^D} \bm c_m^\top (\hat{\bm x}_m -\bm x_0^\circ ) \notag\\
	&\stackrel{(b)}{=}(c_0^\top A_m^{\dagger}B_0 + (\col((c_m^\top A_m^{\dagger}B_m)_{m \in \mc N^{\mathrm D}}))^\top)(\bm z^\circ - \hat{\bm z}) \notag\\
	%	&\stackrel{(c)}{\leq} \|(c_0^\top A_m^{\dagger}B_0)^\top + \col((c_m^\top A_m^{\dagger}B_m)_{m \in \mc N^{\mathrm D}})\|_\infty\|\bm z^\circ - \hat{\bm z}\|_\infty \notag \\
	& \stackrel{(c)}{\leq} \|(c_0^\top A_m^{\dagger}B_0)^\top + \col((c_m^\top A_m^{\dagger}B_m)_{m \in \mc N^{\mathrm D}})\|_\infty \bar{\delta} \notag% \\
	%	& = \mc O(\bar{\delta}), \notag
	\end{align}}%
	where $(a)$ is due to Proposition \ref{cor:lower_bound_rsf_optimality}; $(b)$ is due to \eqref{eq:power_balance_dso}, for all $m \in \mc N^{\mathrm D}$, and 
	\eqref{eq:power_balance_tso}; and $(c)$ is due to 
	the triangle inequality, %; and $(d)$ is due to 
	Lemma \ref{le:distance_rsf_common_sols}, and the definition of $\bar{\delta}$. \qedd 
	%\end{proof}
	
	%\def\baselinestretch{0.9}
	\bibliography{IEEEabrv,ref}
	\bibliographystyle{ieeetr}
	\begin{IEEEbiography}[{\includegraphics[width=1in,height
			=1.2in,clip]{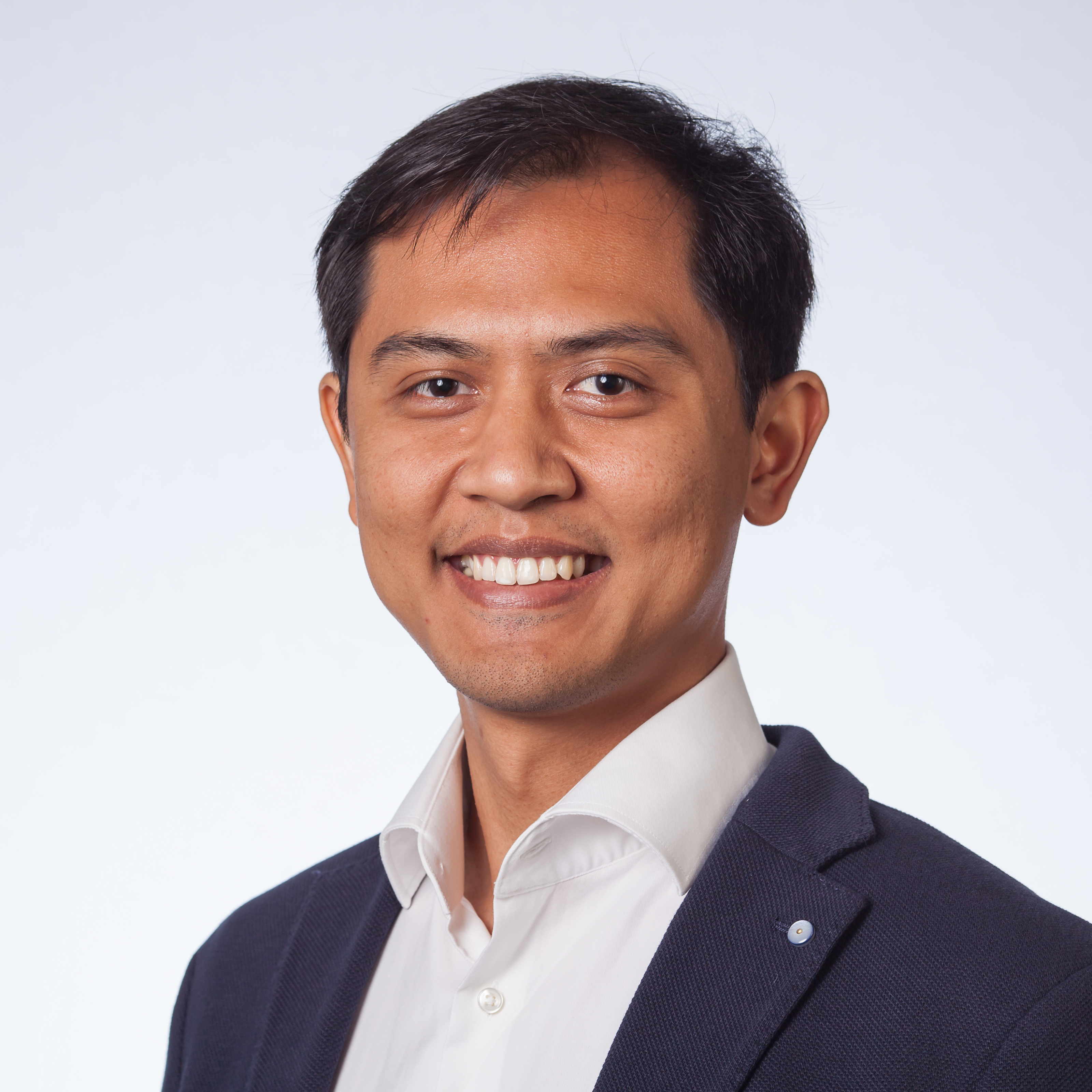}}]{Wicak Ananduta} 
		has been a researcher at the Flemish Institute for Technological Research (VITO), Belgium, since 2023. His work focuses on optimization problems and models for energy markets and systems. He received his Ph.D. degree in Automatic Control, Robotics and Vision from Universitat Polit\`{e}cnica de Catalunya, Spain in 2019, his Master's degree in Systems and Control from TU Delft, The Netherlands in 2016, and his Bachelor's degree in Electrical Engineering from the University of Indonesia in 2011. From 2020-2023, he was a postdoc researcher at the Delft Center for Systems and Control, TU Delft. He was the recipient of the best paper award at the IEEE ISGT Europe conference in 2024.
	\end{IEEEbiography}
	\begin{IEEEbiography}[{\includegraphics[width=1in,height
			=1.2in,clip]{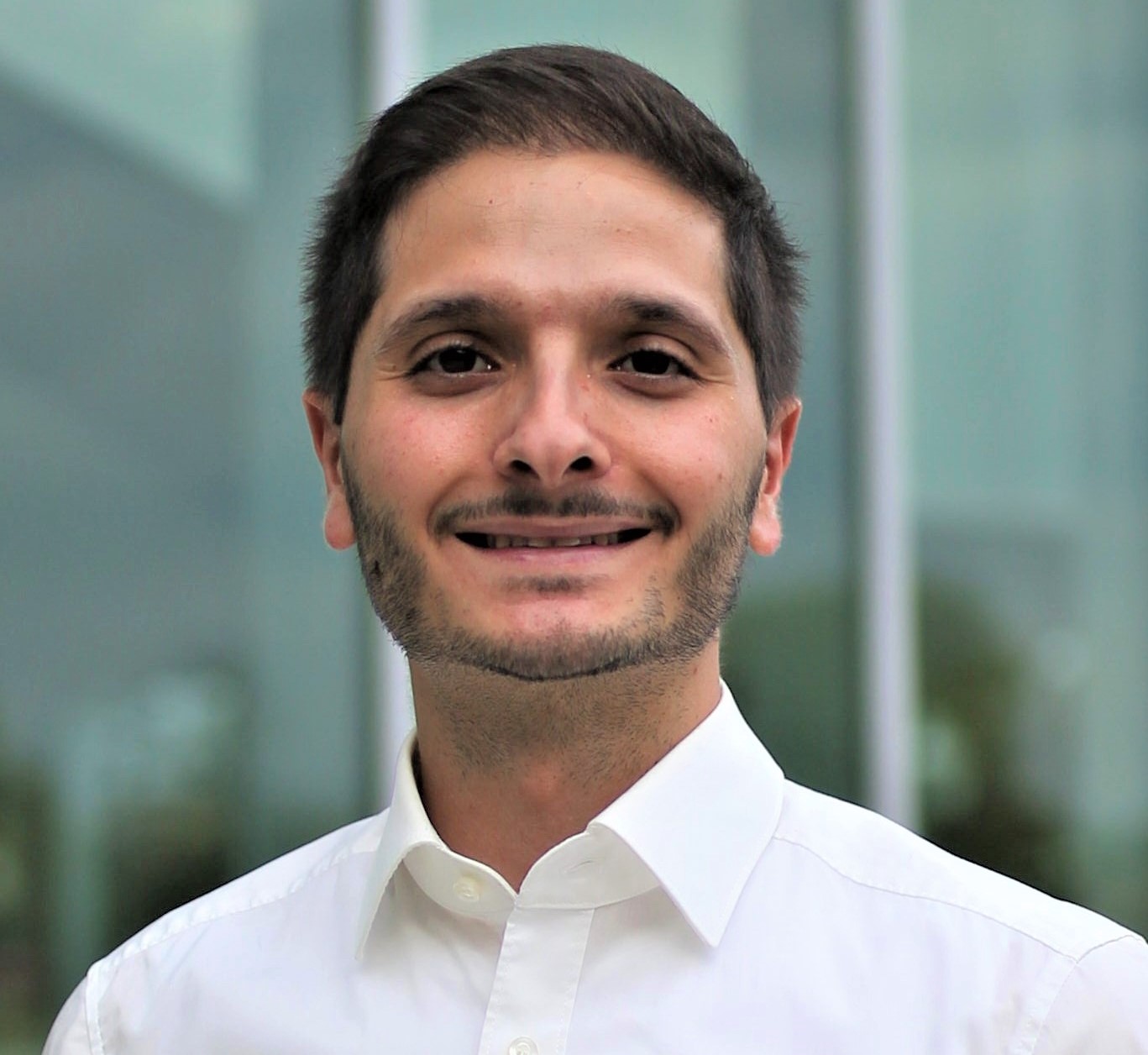}}]{Anibal Sanjab} 
			received the B.E. degree in electrical engineering from the Lebanese American University Byblos, Lebanon, in 2012, and the M.S. and Ph.D. degrees in electrical engineering from Virginia Tech, Blacksburg, VA, USA, in 2014 and 2018, respectively. He has been a Senior Researcher at the Flemish Institute for Technological Research (VITO) and EnergyVille, Belgium, since 2019. His research experience and current interests focus on electricity and energy markets (wholesale and decentralized schemes), renewable energy integration, power systems (operation, planning, and control), flexibility markets and procurement mechanisms, TSO-DSO coordination, sector coupling, and cyber-physical systems operation and security for which he develops mathematical models, tools, and concepts based in game theory, optimization, graph theory, and machine learning. He was awarded the best paper award at the IEEE ISGT Europe in 2024 and the best student paper award at the IEEE SmartGridComm in 2015. He is also the recipient of the Fulbright scholarship for the years 2012-2014. Dr. Sanjab continuously serves on the technical and organizing committees for a number of international conferences and workshops.
	\end{IEEEbiography}
	
	\begin{IEEEbiography}[{\includegraphics[width=1in,height
			=1.2in,clip]{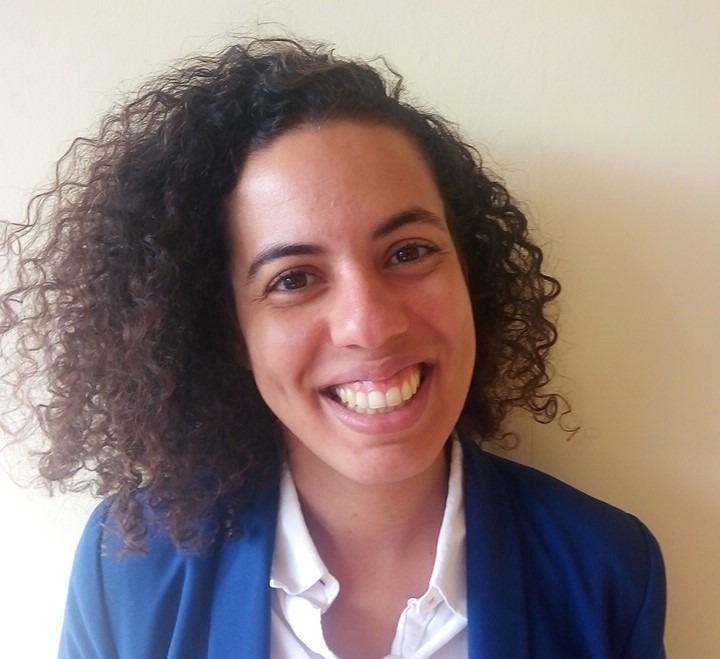}}]{Luciana Marques} 
			holds a Ph.D. in Electrical Engineering (2022) and M.S./B.S. in Industrial Engineering from the Federal University of Minas Gerais, Brazil. She worked at VITO/EnergyVille (BE) from 2021 to January 2025 on European and Belgian energy market projects. Since February 2025, she has been a Senior Energy System Modeler at Open Energy Transition (OET), leading energy modeling studies and open-source tool development. Her expertise includes energy markets, game theory, optimization, and TSO-DSO coordination. She was a Fulbright Scholar at Lawrence Berkeley National Laboratory (2019–2020) and has prior experience as a Business Consultant and Professor.
	\end{IEEEbiography}
	\balance 
\end{document}